\definecolor{abstract_background}{RGB}{235,235,235}%
\definecolor{ao(english)}{rgb}{0.0, 0.5, 0.0}
\theoremstyle{definition}
\newtheorem{theorem}{Theorem}[section]
\newtheorem{proposition}{Proposition}[section]
\newtheorem{remark}{Remark}
\def\R{{\mathbb R}}
\def\N{{\mathbb N}}
\title{Linear Stability Analysis of Physics-Informed Random Projection Neural Networks for ODEs}
\author{
\textbf{Gianluca Fabiani\textcolor{teal}{$^{1}$}, Erik Bollt\textcolor{teal}{$^{2}$},
Constantinos Siettos\textcolor{teal}{$^{3,}$}\thanks{Corresponding authors, emails: \texttt{constantinos.siettos@unina.it, ayannaco@aueb.gr}} , Athanasios N.  Yannacopoulos$^{\textcolor{teal}{4},*}$} \\
{}\\
\textcolor{teal}{$^{(1)}$} Modelling Engineering Risk and Complexity, \emph{Scuola Superiore Meridionale}, Naples 80138, Italy \hspace{1cm}\\
\textcolor{teal}{$^{(2)}$}Dept. of Electrical and Computer Engineering, Clarkson University, Potsdam, NY, USA\\
\textcolor{teal}{$^{(3)}$}Dipartimento di Matematica e Applicazioni ‘‘Renato Caccioppoli", \emph{Universit\`a degli Studi di Napoli}\\ \hspace{0.38cm}\emph{Federico II}, Naples 80126, Italy\\
\textcolor{teal}{$^{(4)}$}Department of Statistics, \emph{Athens University of Economics and Business}, Greece\\
}
\begin{document}
\maketitle

\begin{abstract}
We present a linear stability analysis of physics-informed random projection neural networks (PI-RPNNs), for the numerical solution of {the initial value problem (IVP)} of (stiff) ODEs. We begin by proving that PI-RPNNs are uniform approximators of the solution to ODEs. We then provide a constructive proof demonstrating that PI-RPNNs offer consistent and asymptotically stable numerical schemes, thus convergent schemes. In particular, we prove that multi-collocation PI-RPNNs guarantee asymptotic stability. Our theoretical results are illustrated via numerical solutions of benchmark examples including indicative comparisons with the backward Euler method, the midpoint method, the trapezoidal rule, the 2-stage Gauss scheme, and the 2- and 3-stage Radau schemes. 
\end{abstract}

\begin{keywords}{}
Random Projection Neural Networks, Linear Stability Analysis, stiff ODEs, Physics-Informed Neural Networks (PINNs)
\end{keywords}

\textbf{MSC codes}\\
65L20, 
68T07,65L04, 37N30

\section{Introduction}
Machine learning (ML) as a tool for the solution of differential equations, can be traced back to the '90s \cite{lee1990neural,gerstberger1997feedforward,lagaris1998artificial}. More recently, theoretical and technological advances have bloomed research activity in the field, with  Physics-Informed (Deep) Neural Networks (PINNs) \cite{raissi2019physics,lu2021deepxde,ji2021stiff} being a major development.  Other ML schemes, include Gaussian Processes (GPs), \cite{raissi2018numerical, chen2021solving, chen2024sparse}, 
generative adversarial networks (GANs)\cite{yang2020physics}, neural operators \cite{kovachki2023neural, cao2024laplace}, and the random features model (RF) for learning the solution operator of PDEs \cite{nelsen2021random}. \color{black}
However, such ML schemes often provide moderate numerical accuracy, with much  higher computational costs when compared to traditional methods. Besides, such schemes suffer from the curse of dimensionality, with the usual problems of possible trapping in suboptimal regimes (see also discussions in \cite{wang2021understanding, de2022cost, goswami2023physics}).
An open and challenging problem, revolves around the design and theoretical analysis
of resource-limited, machine learning schemes that can challenge other well-
established numerical analysis techniques.\par
A promising approach to this challenge are various versions of Random Projection Neural Networks (RPNNs) (also called Random Feature Neural Networks,  Extreme Learning Machines \cite{huang2006extreme} and Reservoir computing \cite{jaeger2002adaptive}), i.e.,  single layer feedforward neural networks where the basis functions, internal weights and biases are  randomly  chosen and kept fixed, with the only unknowns being the coefficients in the linear expansion (for a review see in \cite{scardapane2017randomness, galaris2022numerical, fabiani2023parsimonious}. RPNNs have been used for the solution of both the inverse \cite{san2018extreme, galaris2022numerical, fabiani2024task, dong2023method, gonon2023random, fabiani2025randonet, fabiani2025enabling} and the forward problem \cite{yang2018novel, sun2019solving, schiassi2021extreme, fabiani2021numerical, dong2021modified, dong2021local, calabro2021extreme, de2022physics, fabiani2023parsimonious, patsatzis2024slow, datar2024solving} in differential equations. Conceptually, RPNNs have been introduced by Rosenblatt \cite{rosenblatt1962perceptions} in the 1960s, and further implemented and theorized in the 1990s \cite{barron1993universal,igelnik1995stochastic}. A key concept (among others) is the celebrated Johnson-Lindenstrauss Lemma \cite{johnson1984extensions}, which ensures accurate embeddings in a lower (or even a higher dimensional) space that can be implemented via random projections. A prominent example of such a scheme is the Random Vector Functional-Link Network (RVFL) introduced by Igelnik and Pao  (see Thm.  3 in \cite{igelnik1995stochastic}) where the set of random features model   
$f_{N}(x ; \omega)=\sum_{j=1}^N w_j \psi (a_j \cdot x +b_j)$  for activation functions $\psi \in L^{2}(\R)$ or $H^{1}(\R)$  and i.i.d random  parameters $a_j$, $b_j$  is dense (i.e., can approximate arbitrarily well)  real continuous functions on compact  $K \subset \R^d$,  with a rate of convergence $O(N^{-1/2})$ as well as the pioneering work of Rahimi and Recht
(see Thms 3.1 and 3.2 in \cite{rahimi2008uniform})) with similar expansions in appropriate RKHS. Recently, based on these previous works, we introduced and proved that \textit{RandONets} -- a neural operator architecture based on RPNNs -- are universal approximators of functionals and nonlinear operators \cite{fabiani2025randonet}.
\par 
At this point, we highlight the following issues when comparing random projection-based methods and similar traditional/deterministic ones, such as radial basis function (RBF) expansions trained with greedy approaches. First, both greedy approximations with RBF expansions and RPNNs exhibit a convergence rate of order $\mathcal{O}(\frac{1}{ \sqrt{N}})$ independently of the dimensionality \cite{gorban2016approximation}. However, random projection-based techniques offer greater scalability in addressing the ``curse of dimensionality". Unlike greedy approaches, which require the solution of a nonlinear optimization problem at each step, randomized approaches simplify the training process by drawing kernel parameters randomly, reducing the optimization problem into a linear one. On the other hand, random projection-based methods require a careful design of the sampling strategy of the hyperparameters and activation functions for achieving the so-called multi-scale effect in order to improve the generalization of the model (see also the critical discussion in \cite{gorban2016approximation}). Such issues related to the convergence properties and quality of the (probabilistic) approximation, but also the stability properties, of RPNNs, remain open and challenging problems.\par
\color{black}
Our work motivated by the above challenges presents a rigorous \textit{constructive} proof of the stability properties of PI-RPNNs for IVPs of ODEs. We emphasize that, our aim is not to extensively compare this scheme with other traditional schemes or other machine learning approaches. Such detailed comparisons, have been provided in our previous works (see \cite{fabiani2021numerical, fabiani2023parsimonious, fabiani2025randonet}), where optimally designed PI-RPNNs are shown to rival FD, FEMs, and adaptive stiff solvers for ODEs while outperforming physics-informed deep neural networks by orders of magnitude in accuracy and efficiency. 
To the best of our knowledge, there is no other study providing a systematic theoretical constructive stability analysis of PI-RPNNs for the solution of the solution of differential equations; in other works the stability properties and performance of ML schemes, in the context of PINNs, are usually demonstrated only via numerical experiments.
%
The structure of the paper is as follows. In section \ref{sec:PINN_ODEs}, we introduce an appropriate representation of PI-RPNNs for the solution of the IVP problem of ODEs and demonstrate that PI-RPNNs are uniform approximators for such problems. In section \ref{sec:stability}, we provide a constructive proof for the design of PI-RPNNs that guarantees asymptotic stability for the solution of linear ODEs. We furthermore prove that PI-RPNNs are consistent, thus convergent schemes. Our theoretical results are illustrated in section \ref{sec:num_results}, via numerical solutions of two benchmark problems including a system of stiff ODEs, and a stiff linear parabolic PDE, discretized with finite differences. We also present a comparative analysis of the PI-RPNNs versus standard implicit schemes, assessing both numerical accuracy and computational efficiency across different step sizes.
\section{Random Projection neural networks (RPNNs) as approximators to the solution of ODEs}
\label{sec:PINN_ODEs}
\color{black}
Here we consider the solution  $\bm{u}=(u^{(1)},\cdots,u^{(d)})$ of the initial value problem (IVP) of ODEs, with $d$ variables, of the form:
\vspace{-1mm}
\begin{equation}
    \frac{d\bm{u}(t)}{dt}=\bm{f}\big(t,\bm{u}(t)\big), \qquad t\in [t_0, T], \qquad \bm{u}(t_0)=\bm{u}_0,
    \label{eq:nonlinODE}
\end{equation}
where $\bm{f}:\R\times \R^d\rightarrow \R^d$ satisfies the assumptions of the Picard-Lindel\"of theorem (in particular continuous in $t$ and  Lipschitz in $u$), so that a unique solution $u \in C(K ; {\mathbb R}^{d})$ (i.e. continuous) exists for \eqref{sec:PINN_ODEs}, for any $K \subset [t_0, T]$, compact. Then, by the change of variables $t \mapsto \tau$ with  $t= t_0 + \tau (T- t_0)$ we obtain the integral representation
\vspace{-1mm}
\begin{equation}
\bm{u}(t)=\bm{u}_0+(t-t_0)\int_{0}^{1}\bm{f}(\tau(t-t_0)+t_0,\bm{u}(\tau(t-t_0)+t_0))d\tau,
\label{eq:PLInt-0}
\end{equation}
\color{black}
which is approximated (component-wise) in terms of the RPNN 
(see \cite{fabiani2023parsimonious}):
\begin{equation}
\hat{u}^{(k)}(t,\bm{w}_k)=u_{0}^{(k)}+(t-t_0) \sum_{j=1}^N w_{j,k} \phi(t,\tau_j,\theta_j), \quad \phi(t,\tau_j,\theta_j)=\exp{(-\alpha_U \theta_j (t-\tau_j)^2)},
\label{eq:trsol-0}
\end{equation}
where  
the random shape parameters $\bm{\theta}=(\theta_1,\theta_2,\cdots,\theta_N)$ are i.i.d. draws from a suitable distribution (here $\theta_{i} \sim {\cal U}[0, 1]$), $\alpha_{U}$ is a deterministic parameter to be specified so as to ensure stability); the centers $\bm{\tau}=(\tau_1,\tau_2,\dots,\tau_N)$ equidistant and deterministic, and $w_{j,k}$ are the $N$ unknown  random weights to be obtained by minimizing an appropriate error functional. Note that \eqref{eq:trsol-0} satisfies explicitly the initial condition at $t=t_0$.
\par
Physics Informed RPNNs (PI-RPNNs) are approximators for the solutions of \eqref{eq:trsol-0}, where the error functional involves the form of the ODE at given collocation points.
For $M$ collocation points, $c_i$, $i=1, \cdots, M$, the training of $w_{j,k}$ is carried out  by minimizing the $M$ nonlinear residuals ${\bf F}_i \in \R^{d}$
\vspace{-1mm}
\begin{equation}
{\bf F}_i({\bf w})= \dfrac{d\hat{{\bf u}}}{dt} (c_i,\bm{w}) -{\bf f}(c_i, \hat{{\bf u}}(c_i,\bm{w})), \,\,\,\, i=1, \cdots, M.
\label{eq:Fq}
\end{equation}
This is a vector minimization problem that can be scalarized in terms of the minimization of the functional
\begin{eqnarray}
{\cal L}_{\delta}({\bf w})= {\mathbb E}\left[ \frac{1}{2} \sum_{i=1}^{M} \| {\bf F}_{i} \|_{{\mathbb R}^d}^2  + \frac{\delta}{2} \| {\bf w} \|_{{\mathbb R}^{N \times d}}^2\right],
\label{OPT-24-2-205}
\end{eqnarray}
where $\delta >0$ is a regularization parameter and by $\| {\bf w} \|_{{\mathbb R}^{N \times d}}$ we denote the Frobenius norm of the matrix ${\bf w}=(w_{j,k})$.
 The solution of the above non-linear least-squares problem can be obtained, e.g. with quasi-Newton, Gauss-Newton,
but also with truncated SVD decomposition, or, QR factorization with regularization \cite{galaris2022numerical, fabiani2023parsimonious, fabiani2025random}, included to ensure better properties of the estimators of the solution in terms of variance (as for example in ridge regression).\par

At this point, we will demonstrate the following proposition:

\begin{proposition} 
RPNNs  of the form \eqref{eq:trsol-0}
 are universal approximators of the solution to set of ODEs  \eqref{eq:nonlinODE} in $C(K ; \R^d)$, where $K \subset [t_0, T]$, compact.  
\end{proposition}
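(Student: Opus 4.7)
The plan is to reduce the claim to a universal approximation result for random feature expansions applied to a continuous target built from $\bm{u}$. By Picard--Lindel\"of, the IVP \eqref{eq:nonlinODE} admits a unique solution $\bm{u}\in C^1([t_0,T];\R^d)$, hence $\bm{u}\in C(K;\R^d)$ for every compact $K\subset [t_0,T]$. Since the ansatz \eqref{eq:trsol-0} satisfies the initial condition by construction, it is enough to approximate $\bm{u}(t)-\bm{u}_0$ uniformly on $K$ by the random expansion $(t-t_0)\sum_{j=1}^N w_{j,k}\phi(t,\tau_j,\theta_j)$. To handle the factor $(t-t_0)$ without losing smoothness at the endpoint, define componentwise
\begin{equation*}
g^{(k)}(t):=\begin{cases}\dfrac{u^{(k)}(t)-u_0^{(k)}}{t-t_0}, & t\in K\setminus\{t_0\},\\[2pt] f^{(k)}(t_0,\bm{u}_0), & t=t_0,\end{cases}
\end{equation*}
which, because $u^{(k)}\in C^1$, is continuous on $K$. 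Uniform approximation of $g^{(k)}$ by $\sum_{j=1}^N w_{j,k}\phi(\cdot,\tau_j,\theta_j)$ immediately yields uniform approximation of $u^{(k)}-u_0^{(k)}$ on $K$ after multiplication by the bounded factor $(t-t_0)$, and hence uniform approximation of $u^{(k)}$ by $\hat{u}^{(k)}(\cdot,\bm{w}_k)$.

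Next, I would invoke the random features universal approximation result in the spirit of Igelnik--Pao \cite{igelnik1995stochastic} and Rahimi--Recht \cite{rahimi2008uniform}, adapted to the one--dimensional Gaussian dictionary $\{\phi(\cdot,\tau_j,\theta_j)\}$ used here. Concretely, one writes the target $g^{(k)}$ as an integral representation against the Gaussian kernel and discretizes the integral by Monte--Carlo sampling of the shape parameters $\theta_j\sim\mathcal{U}[0,1]$; a standard empirical--process/concentration argument then delivers weights $w_{j,k}$ for which $\|g^{(k)}-\sum_{j=1}^N w_{j,k}\phi(\cdot,\tau_j,\theta_j)\|_{C(K)}\to 0$ in probability (indeed with the usual $O(N^{-1/2})$ rate). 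Equivalently, since the equidistant centres $\tau_j$ become dense in $K$ and the Gaussian RBF dictionary is dense in $C(K)$, one can first invoke the classical density of Gaussian RBFs and then use the random shape parameters to realize the required expansion with positive probability, tuning $\alpha_U$ so that the effective widths $\alpha_U\theta_j$ span both narrow and wide scales needed for the multi--scale approximation alluded to in the introduction.

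The main obstacle is that the standard Igelnik--Pao statement places randomness on the inner parameters $(a_j,b_j)$ of a ridge function $\psi(a_j\cdot x+b_j)$, whereas our construction has \emph{deterministic} equidistant centres $\tau_j$ and \emph{random} shape parameters $\theta_j$. Bridging this gap requires either rewriting the Gaussian $\exp(-\alpha_U\theta_j(t-\tau_j)^2)$ in a form to which an existing random--features theorem applies (e.g.\ via an integral representation indexed by $\theta_j$ at each fixed $\tau_j$), or giving a self--contained argument that combines the classical density of Gaussian RBFs on $K\subset\R$ with a probabilistic selection of widths. Once that density/selection step is in place, continuity of $g^{(k)}$, compactness of $K$, and boundedness of $(t-t_0)$ make the remainder of the argument routine.
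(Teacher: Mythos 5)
Your overall architecture coincides with the paper's: your auxiliary function $g^{(k)}$ is exactly the function the paper calls $\Psi$ in the integral representation \eqref{eq:PLInt-0}, since $\Psi(t)=\frac{1}{t-t_0}\int_{t_0}^{t}f(s,u(s))\,ds=\frac{u(t)-u_0}{t-t_0}$; both arguments then proceed in two stages, first approximating this continuous target by a deterministic Gaussian RBF expansion (classical density of Gaussian RBFs on compact sets, as in \cite{park1993approximation}), and then replacing the deterministic shape parameters by i.i.d. draws. Your continuity argument for $g^{(k)}$ via the $C^1$ regularity of the solution is in fact slightly cleaner than the paper's dominated-convergence argument, which needs an extra boundedness/integrability hypothesis on $t\mapsto f(t,u)$.

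However, the proposal has a genuine gap at precisely the point you flag: the passage from the deterministic RBF expansion to the random-feature expansion with deterministic equidistant centres $\tau_j$ and random shape parameters $\theta_j\sim\mathcal{U}[0,1]$. The Igelnik--Pao and Rahimi--Recht theorems you invoke randomize the inner (ridge/translation) parameters and do not apply off the shelf here, and the Monte--Carlo/integral-representation sketch you give is not carried out; ``realize the required expansion with positive probability'' is not a proof of density of the random function class in the relevant space. The paper closes this step by appealing to Theorem 3.2 of \cite{neufeld2023universal}, which gives density of random neural networks of the form \eqref{15-2-2025-1} in the Bochner space $L^{r}(\Omega,\mathcal{F}_{\Theta},P;X)$ with $X=C(K)$, under a full-support assumption on the law of the randomized parameters --- exactly the situation here, where only $\theta_j$ is random. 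Without this (or an equivalent self-contained argument showing that randomly sampled widths at fixed centres still span a dense subspace almost surely or in the Bochner sense), your proof does not establish the universal approximation property for the \emph{random} expansion \eqref{eq:trsol-0}, which is the content of the proposition.
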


\begin{proof}(Sketch) It suffices to prove the result for the case $d=1$, and we simplify the notation by dropping the super/sub scripts $k$ refering to the components of ${\bf u}$ in the expansion \eqref{eq:trsol-0}.


We start with the equivalent representation \eqref{eq:PLInt-0} with the IVP \eqref{eq:nonlinODE},
and define the continuous function 
\begin{eqnarray*}
t \mapsto :\Psi(t) = \int_{0}^{1}\bm{f}(\tau(t-t_0)+t_0,\bm{u}(\tau(t-t_0)+t_0))d\tau.
\end{eqnarray*}
The continuity of $\Psi$  on $K \subset \R_{+}$ follows by an application of the Lebesgue dominated convergence theorem (using 
the Lipschitz  continuity assumption on $u \mapsto f(t, u)$ and an extra boundedness or integrability assumption on $t \mapsto f(t, u)$ ).

Consider the family of feature functions ${\cal G}=\{ g_i \,\, : \,\, i=1, \cdots, N\}$, where $t \mapsto g_i(t ; \theta):=\exp(-\alpha_{U} \theta (t -\tau_i)^2)$, with $\theta \in \Theta$ a scale parameter (to be randomized) and $\tau_i$ the deterministic location parameters (centers) of the Gaussians. For appropriate choice of the centers $\{\tau_i\}$, the family $G(\Theta) := span( \{ g(\cdot ; \theta) \,\, : \,\, g \in {\cal G}, \,\, \theta \in \Theta\})$ is dense in the function space $X$ (chosen to be either $C(K ; \R)$ or $L^{p}(K ; \R)$ for an appropriate $p \in [1, \infty]$).
Note that $G(\Theta)$ contains finite sums of the form $\sum_{i=1}^{N} c_i g_{i}(t ; \theta_i)=\sum_{i=1}^{N} c_i \exp(- \alpha_{U}\theta_i (t - t_i)^2)$ for some $\theta_i \in \Theta$, hence the stated density results follows by standard arguments related to 
the universal approximation theorem for RBF (see, e.g., \cite{park1993approximation} Thm 2, or \cite{liao2003relaxed}  Thm. 1) applied for  Gaussian RBF.
This step can be used to approximate the continuous function $\Psi$ in terms of an expansion in suitably selected Gaussian RBF, which leads to \eqref{eq:trsol-0}. This settles the validity of the expansion \eqref{eq:trsol-0} for deterministic choice of the parameters. Note that by the continuity of the Gaussians, the compactness of $K$ and by assuming a sufficiently dense set of (preassigned) collocation points  $\tau = (\tau_1, \cdots, \tau_N)$ we can approximate the required function $\Psi$ by a deterministic sum  of the form 
 \begin{eqnarray}\label{15-2-2025}
\Psi(t)=\sum_{j=1}^N c_j g_{j}(t; \theta_j)= \sum_{j=1}^{N} c_j \phi(t,\tau_j,\theta_j),
\end{eqnarray}
where $\phi(t,\tau_j,\theta_j)=\exp{(-\alpha_U \theta_j (t-\tau_j)^2)}$.
This leads to  the deterministic form of our proposed expansion  (see \eqref{eq:trsol-0} and comment in the beginning of the proof).

We now consider the problem of the actual random feature representation, proposed in  \eqref{eq:trsol-0}. This requires exchanging the deterministic parameters $\theta_j \in \Theta$ in \eqref{15-2-2025} by an i.i.d. random sample $\theta_1, \cdots, \theta_N \sim P$ (from a probability distribution $P$ satisfying a full support condition -- here chosen to be the uniform distribution on a suitable interval), while keeping the $\tau_j$ deterministic (and subsequently   also randomizing the weights $c_j$).
 Our random features (projection) neural network then assumes the form 
 \begin{equation}\label{15-2-2025-1}
 G_{N}(t, \omega)= \sum_{i=1}^{N} y_{n}(\omega) g_{i}(t; \theta_{i}(\omega)),
 \end{equation}
 for $\theta_i$, $i=1, \cdots, N$, i.i.d. random variables drawn by a probability distribution satisfying the full support assumption (see Assumption 3.1 
in \cite{neufeld2023universal}), and for the choice $g_i(\cdot, \theta)=\phi(\cdot , \tau_i, \theta)$ (with $\phi$ as above) we obtain an expansion for $\Psi$ which upon substitution renders \eqref{eq:trsol-0}. The set of random functions of the form \eqref{15-2-2025-1} will be denoted by ${\cal RG}$. To conclude the proof and show the universal approximation property of expansion of the form \eqref{eq:trsol-0} to the function space $X$ (chosen here $X=C(K ; \R)=C(K)$) it suffices to prove the density of ${\cal RG}$ in $L^{r}(\Omega, {\cal F}_{\Theta}, P ; X)$ for $r \in [1, \infty)$, where $(\Omega, {\cal F}_{\Theta}, P)$ is the probability space related to the randomization of the parameters $\alpha$. This step is provided 
 by applying Theorem 3.2 in \cite{neufeld2023universal}.
\end{proof}


\begin{remark}
Concerning the rate of convergence of the expansion in \eqref{eq:trsol-0} one may employ results in the spirit of Thm. 4.5 in \cite{neufeld2023universal}. These strongly depend on the nature of the function to be approximated (in terms of the type of the Banach space $X$ (containing the functions to be approximated)  and the probabilistic Bochner space in which the approximation is required. For example, if the solutions is such that the function $\Psi$ is in $L^{2}$ (or an appropriate Sobolev space) then the rate of convergence of the expansion is of the order $O(N^{-1/2})$, i.e., of the same order of magnitude as for the Rahimi and Recht result. Importantly, the solutions the ODEs considered here enjoy membership in such Sobolev spaces.
\end{remark}

In the following section, we will convey the stability analysis for both multi-collocation and single-collocation ($M=1$) schemes. We note that the analysis can be naturally extended to nonlinear systems of ODEs near their steady states. 

\color{black}
\section{Linear Stability Analysis of {PI-RPNNs}}
\label{sec:stability}
For studying the linear stability of RPNNs we must consider the linearization of problem \eqref{eq:nonlinODE} around an equilibrium, which assumes the form 
\vspace{-1mm}
\begin{equation}
    \frac{d\bm{u}(t)}{dt}=A \bm{u},  \qquad \bm{u}(t_0)=\bm{u}_0,
    \label{eq:simplelinear0}
\end{equation}
where
$A \in \R^{d \times d}$ is the corresponding Jacobian matrix of ${\bf f}$ at the chosen equilibrium, and $\bm{u}_0 \in \R^{d}$ is a given initial condition.
Breaking up $[t_0, T]$ into a sequence of sub-intervals $[t_{\ell}, t_{\ell + 1}]$, $\ell \in \N$, and using RPNNs of the form \eqref{eq:trsol-0} for each such interval we produce 
 a sequence $\{ \hat{{\bf u}}_\ell \}_{\ell \in {\mathbb N}} \subset \R^{d}$ of random vectors, where, for each $\ell \in {\mathbb N}$, $\hat{u}_\ell$ is an approximation of the solution of the system \eqref{eq:trsol-0} in the interval $[t_{\ell},t_{\ell+1})$. Hence, the RPNN scheme may be expressed as:
\vspace{-1mm}
\begin{eqnarray}\label{TH-COMPACT}
\hat{\bm{u}}_{\ell+1}(\omega) = S(\theta, \omega) \hat{\bm{u}}_{\ell}(\omega), 
\vspace{-1mm}
\end{eqnarray}
where we use $\omega$ to emphasize that the output of the proposed numerical method is random on account of the randomization of the parameters $\theta \in \Theta$ (in our case $\theta=\alpha$) and $S(\theta, \omega)$ is a random matrix. As an intermediate, important, step in our analysis, we
 first consider the stability analysis of the linear scalar ODE.
Then, in Section \ref{sec:system_stability} we will use these results to extend our analysis to the system of ODEs \eqref{eq:simplelinear0}.
\subsection{The case of linear scalar ODEs}
We first consider the scalar problem (d=1), setting $A=\lambda \in \R$ in \eqref{eq:simplelinear0}, i.e.,
\begin{equation}\label{graf-2025}
\frac{du(t)}{dt}=\lambda u, \quad u(t_0)=u_0,
\end{equation}
so that $S(\theta, \omega) \in \R$.  The system is asymptotically stable if $|S| < 1$ and unstable if $|S| > 1$.

Next, we prove  that the PI-RPNN scheme is asymptotically stable (even for very large stiffness). Then, we prove the consistency of the scheme.
For  analysis, we choose fixed equidistant collocation points $c_{i} \in (t_{\ell-1}, t_{\ell}]$, $i =1, \cdots, M$, defined as:
\vspace{-1mm}
\begin{equation}\label{kalamata-collocation-1}
    c_{{i}}=t_{\ell-1}+(t_{\ell}-t_{\ell-1})\zeta_i=t_{\ell-1}+h\zeta_{i}, \quad \zeta_{i}=\frac{i}{M}.
    \vspace{-1mm}
\end{equation}
We also define for each $\ell$, the matrix  $\Psi^{(\ell)} \in \R^{N \times M}$, with elements
\begin{equation}
\psi_{j i}^{(\ell)}:=\psi_{j}(c_{i},\theta^{(\ell)}_j)
:=\phi(t,\tau^{(\ell)}_j,\theta^{(\ell)}_j) + (c_i-t_{\ell-1}) \phi'(c_i,\tau^{(\ell)}_j,\theta^{(\ell)}_j) - \lambda  (c_i - t_{\ell-1}) \phi(c_i,\tau^{(\ell)}_j,\theta^{(\ell)}_j),
\end{equation}
where $j=1, \cdots, N$ and $i=1, \cdots, M$, as well as the vector
\begin{equation}
\Phi^{(\ell)}(t) = (\phi(t, \tau_{1}^{(\ell)},\theta_{1}^{(\ell)}), \cdots, \phi(t, \tau_{N}^{(\ell)},\theta_{N}^{(\ell)}))^{T} = (\phi^{(\ell)}_1(t), \cdots, \phi^{(\ell)}_N(t) )^{T} \in {\mathbb R}^{N \times 1}.
\label{matrix-form-2025}
\end{equation}
We will also use the notation $\hat{u}_{\ell}:=\hat{u}(t_{\ell})$, which does not necessarily (as an approximation) coincide with $u(t_{\ell})$, except when $\ell=0$, i.e., at the initial condition.

The training of the RPNN, {within the framework of PINNs, called hereafter a PI-RPNN} is done by choosing the weights as minimizers of the (family) of loss functionals
\vspace{-1mm}
\begin{eqnarray}\label{LOSS-2025}
{\cal L}_{\delta}^{(\ell)}(w)= {\mathbb E}[{\cal L}^{(\ell)}(w ; \omega)] = {\mathbb E} \bigg[ \frac{1}{2} \sum_{i=1}^{M} (\hat{u}'(c_i ; w) -\lambda \hat{u}(c_i ; w))^2 + \frac{\delta}{2} \sum_{j=1}^{N} | w_{j} |^2 \bigg], 
\end{eqnarray}
$\delta > 0$, for $c_i \in (t_{\ell-1}, t_{\ell}]$ as in \eqref{kalamata-collocation-1} above. The second term in ${\cal L}^{(\ell)}_{\delta}$ is a regularization term, which allows for a unique solution to the minimization problem. 
\begin{proposition}\label{KALAMATA-PROPOSITION} 
The regularized multi-collocation scheme for \eqref{graf-2025}, provides $\forall t \in (t_{\ell -1}, t_{\ell})$ the solution 
for each of the collocation points $c_i \in (t_{\ell -1}, t_{\ell})$ (defined as in \eqref{kalamata-collocation-1}):
\begin{equation}\label{peroni-1000}
\begin{aligned}
\hat{u}_{\ell}(c_i)=\hat{u}_{\ell -1} \bigg( 1 +  \lambda {h} \zeta_i (\Phi^{(\ell)}_{i})^{T}  (\Psi^{(\ell)} (\Psi^{(\ell)} )^{T}  + \delta  I)^{-1} \Psi^{(\ell)}    {\bf 1}_{M \times 1}  \bigg), i=1, \cdots, M, \,\,\, \delta > 0.
\end{aligned}
\end{equation}
In the limit as $\delta \to 0^{+}$ this reduces to
\vspace{-1mm}
\begin{eqnarray*}
u(t)=\hat{u}_{\ell -1} + (t -t_{\ell-1}) (\Phi^{(\ell)})^{T}(t) w,
\end{eqnarray*}
where $w$ is 
the least-squares solution of the collocation system
\begin{equation}
\sum_{j=1}^{N} w_{j}^{(\ell)} \psi_{ji}^{(\ell)}=\lambda  u(t_{\ell-1}), \,\,\, i=1, \cdots, M \,\,\, \Longleftrightarrow \,\,\,
     \, (\Psi^{(\ell)}_{N\times M} )^{T} \,\bm{w}^{(\ell)}_{N\times 1}  =\lambda u(t_{\ell-1}){\bf 1}_{1\times M}.
    \label{eq:systemmatrixnot}
\end{equation}
\end{proposition}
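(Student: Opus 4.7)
\medskip

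\noindent\textbf{Proof proposal.}
The plan is to reduce the problem to a linear least-squares system by computing the Euler--Lagrange (first-order) condition for the quadratic functional $\mathcal{L}_\delta^{(\ell)}$, then to solve this system in closed form using the regularized normal equations, and finally to substitute the minimizer back into the ansatz \eqref{eq:trsol-0} evaluated at the collocation points. The key observation is that, for the linear scalar ODE \eqref{graf-2025}, the map $w \mapsto \hat u'(c_i; w) - \lambda \hat u(c_i;w)$ is affine in $w$, so the loss is quadratic and the minimization reduces to solving a Tikhonov-regularized linear problem, whose solution we can write explicitly.

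First, I would plug the RPNN ansatz $\hat u(t;w)=\hat u_{\ell-1}+(t-t_{\ell-1})\sum_{j=1}^N w_j\phi_j^{(\ell)}(t)$ into the residual, differentiate in $t$, and collect terms at $t=c_i$. This yields
\begin{equation*}
\hat u'(c_i;w)-\lambda\hat u(c_i;w)=\sum_{j=1}^N w_j\Big[\phi_j^{(\ell)}(c_i)+(c_i-t_{\ell-1})\phi_j'^{(\ell)}(c_i)-\lambda(c_i-t_{\ell-1})\phi_j^{(\ell)}(c_i)\Big]-\lambda\hat u_{\ell-1}.
\end{equation*}
The bracketed quantity is exactly $\psi_{ji}^{(\ell)}$, so in matrix form the vector of residuals at the $M$ collocation points equals $(\Psi^{(\ell)})^{T}w-\lambda\hat u_{\ell-1}\mathbf{1}_{M\times 1}$. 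Hence ${\cal L}_\delta^{(\ell)}(w)=\tfrac12\|(\Psi^{(\ell)})^{T}w-\lambda\hat u_{\ell-1}\mathbf{1}\|^2+\tfrac{\delta}{2}\|w\|^2$ is strictly convex for $\delta>0$, so admits a unique minimizer characterized by the regularized normal equations
\begin{equation*}
\big(\Psi^{(\ell)}(\Psi^{(\ell)})^{T}+\delta I\big)w=\lambda\hat u_{\ell-1}\,\Psi^{(\ell)}\mathbf{1}_{M\times 1}.
\end{equation*}
Since the left-hand-side matrix is symmetric positive definite, I can invert to obtain $w=\lambda\hat u_{\ell-1}(\Psi^{(\ell)}(\Psi^{(\ell)})^{T}+\delta I)^{-1}\Psi^{(\ell)}\mathbf{1}_{M\times 1}$.

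Next, I would substitute this $w$ back into $\hat u(c_i;w)=\hat u_{\ell-1}+(c_i-t_{\ell-1})(\Phi_i^{(\ell)})^{T}w$ and use $c_i-t_{\ell-1}=h\zeta_i$ to arrive at the display \eqref{peroni-1000}; this is essentially a one-line rearrangement once the previous step is written out. For the limiting statement as $\delta\to 0^{+}$, I would invoke the standard SVD argument: writing $\Psi^{(\ell)}=U\Sigma V^{T}$, the operator $(\Psi^{(\ell)}(\Psi^{(\ell)})^{T}+\delta I)^{-1}\Psi^{(\ell)}$ converges entrywise to the Moore--Penrose pseudoinverse $((\Psi^{(\ell)})^{T})^{+}$, so $w$ converges to the minimum-norm least-squares solution of $(\Psi^{(\ell)})^{T}w=\lambda\hat u_{\ell-1}\mathbf{1}$, i.e.\ precisely the system \eqref{eq:systemmatrixnot}; inserting this limit into the ansatz gives the reduced representation.

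The main obstacle, such as it is, is not in any single computation but in keeping the dimensions and index conventions consistent: $\Psi^{(\ell)}$ is $N\times M$ (not square), so the collocation system \eqref{eq:systemmatrixnot} is typically underdetermined and the regularization plays the essential role of selecting a unique solution. Care must therefore be taken to justify the invertibility used above (via $\delta>0$) and to identify the $\delta\to 0^{+}$ limit with the minimum-norm solution rather than with an inverse that may not exist; this is handled cleanly by the SVD. The rest of the argument is bookkeeping driven entirely by the linearity of the problem in $w$.
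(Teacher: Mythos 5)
Your proposal is correct and follows essentially the same route as the paper: both derive the affine residual $(\Psi^{(\ell)})^{T}w-\lambda\hat u_{\ell-1}\mathbf{1}_{M\times 1}$, obtain the regularized normal equations from the first-order conditions of the quadratic loss, invert $\Psi^{(\ell)}(\Psi^{(\ell)})^{T}+\delta I$, and substitute back into the ansatz. The only cosmetic difference is that the paper identifies the $\delta\to 0^{+}$ limit by directly quoting the standard identity $A^{\dagger}=\lim_{\delta\to 0^{+}}A^{T}(AA^{T}+\delta I)^{-1}$, whereas you justify the same fact via the SVD — an equivalent argument.
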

\begin{proof}
For each interval, $[t_{\ell-1}, t_{\ell}]$ consider the expansion \eqref{eq:trsol-0}, denoting the appropriate weights by $w_{j}^{(\ell)}$, $j=1, \cdots, N$, and use it to calculate the ODE at each of  
the collocation points $c_{i} \in (t_{\ell-1}, t_{\ell}]$
which upon defining  $\psi_{j}(t,\alpha^{(\ell)}_j)$ as in \eqref{matrix-form-2025},
reduces for all $i=1, \cdots, M$ to
\begin{equation}
E_{i}:=\hat{u}'(c_{i}) - \lambda \hat{u}(c_{i}) = 
\sum_{j=1}^{N} w^{(\ell)}_{j} \psi_{j}(c_{i},\theta_{j}^{(\ell)}) - \lambda \hat{u}_{\ell-1} = [(\Psi^{(\ell)})^{T} w^{(\ell)} -\lambda \hat{u}_{\ell-1}{\mathbf 1}_{M \times 1}  ]_{i},
\label{eq:expectedloss}
\vspace{-1mm}
\end{equation}
where $\Psi^{(\ell)}=(\psi_{ji}^{(\ell)})$ with $\psi_{j i}^{(\ell)}:=\psi_{j}(c_{i},\theta^{(\ell)}_j)$ and for any vector $Q \in \R^{N}$ by $[Q]_{i}$ we denote its $i$-th coordinate.
In terms of the above notation, for every $\omega \in \Omega$, we obtain from \eqref{LOSS-2025} that
\begin{equation}
{\cal L}_{\delta}^{(\ell)}(w, \omega) = \frac{1}{2} \|E \|_{{\mathbb R}^M}^2 + \frac{\delta}{2} \|w^{(\ell)} \|_{{\mathbb R}^{N}}^2= \frac{1}{2} \big\| (\Psi^{(\ell)})^{T} w^{(\ell)} - \lambda \hat{u}_{\ell -1} {\mathbf 1}_{M \times 1} \big\|_{{\mathbb R}^{M}}^2 + \frac{\delta}{2} \| w^{(\ell)} \|_{{\mathbb R}^{N}}^2,
\end{equation}
 The solution to the problem of minimizing ${\mathbb E}[{\cal L}_{\delta}(w, \omega)]$ can be obtained by pointwise solving the problem for each $\omega \in \Omega$. Note that for $\delta =0$
this problem reduces to a linear least squares problem.
For the case that $\delta >0$, we proceed with the first order conditions of the function ${\cal L}_{\delta}(w , \omega)$.
Differentiate with respect to $w^{(\ell)}_q$, $q=1, \cdots, N$, we get:
\vspace{-1mm}
\begin{equation}
\frac{\partial {\cal L}_{\delta}}{\partial w^{(\ell)}_q} = \sum_{i=1}^{M} \bigg( \sum_{j=1}^{N} w^{(\ell)}_{j} \psi_{j i}^{(\ell)} - \lambda 
\hat{u}_{\ell-1}  \bigg) \psi_{q i}^{(\ell)} + \delta w_{q }^{(\ell)} =0, \,\,\, q=1, \cdots, N.
\end{equation}
This can be expressed compactly in terms of the matrix $\Psi^{(\ell)} \in \R^{N \times M} $
as
\vspace{-1mm}
\begin{equation}
\bigg(\Psi^{(\ell)} (\Psi^{(\ell)} )^{T}  + \delta  I\bigg) w^{(\ell)}= \Psi^{(\ell)} {\bf 1}_{M \times 1} \lambda \hat{u}_{\ell -1},
\end{equation}
which yields a solution for the regularized problem in terms of
\begin{eqnarray}
\label{athens-2024}
w^{(\ell)}_{N \times 1} =((\Psi^{(\ell)})^{T})^{\dagger}_{\delta} {\bf 1}_{M \times 1} \lambda \hat{u}_{\ell -1}, \quad ((\Psi^{(\ell)})^{T})^{\dagger}_{\delta} := \big[\Psi^{(\ell)} (\Psi^{(\ell)} )^{T}  + \delta  I\big]^{-1} \Psi^{(\ell)}.
\end{eqnarray}
Note that this result holds even if $\Psi^{(\ell)} (\Psi^{(\ell)} )^{T}$  
is not invertible. 
Hence, 
\vspace{-1mm}
\begin{equation}
\begin{aligned}
u(t)=
\hat{u}_{\ell -1} \bigg( 1 +  \lambda (t - t_{\ell-1}) (\Phi^{(\ell)})^{T}(t)  \bigg[\Psi^{(\ell)} (\Psi^{(\ell)} )^{T}  + \delta  I \bigg]^{-1} \Psi^{(\ell)}    {\bf 1}_{M \times 1}  \bigg).
\end{aligned}
\end{equation}
The connection with the Moore-Penrose pseudo inverse comes by recalling the well known relation that for any matrix $A$:
\begin{eqnarray}\label{11-7-2025-2}
A^{\dagger}=\lim_{\delta \to 0^{+}} (A^{T} A + \delta I)^{-1} A^{T}= \lim_{\delta \to 0^{+}} A^{T}(A A^{T} + \delta I)^{-1}.
\end{eqnarray}
The stated result arises by taking the transpose of \eqref{athens-2024}.
\end{proof}
\subsubsection{Stability analysis of PI-RPNNs for linear scalar ODEs}
We will first prove the following theorem.

\begin{theorem}\label{PAGKRATI-STABILITY-THM}  Set the parameters in the PI-RPNN  \eqref{eq:trsol-0} such that $\epsilon:=a_{U}(\lambda, h, M) h^2$ is sufficiently small. In the limit as $a_{U}(\lambda, h, M) h^2 \to 0$, the scheme \eqref{eq:trsol-0} is asymptotically stable a.s. $\forall \lambda h <0$.
%
\end{theorem}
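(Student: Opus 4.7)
The plan is to exploit Proposition \ref{KALAMATA-PROPOSITION} to obtain a closed form for the one-step amplification factor of the PI-RPNN update $\hat u_{\ell}=S(\theta,\omega)\hat u_{\ell-1}$, take the deterministic limit $\epsilon\to 0$, and verify algebraically that the resulting limit is strictly less than $1$ in modulus whenever $z:=\lambda h<0$. Setting $i=M$ (so that $\zeta_{M}=1$) in \eqref{peroni-1000} gives
\begin{equation*}
S(\theta,\omega)=1+\lambda h\,\Phi^{(\ell)}(t_{\ell})^{T}\bigl(\Psi^{(\ell)}(\Psi^{(\ell)})^{T}+\delta I\bigr)^{-1}\Psi^{(\ell)}\mathbf{1}_{M\times 1}.
\end{equation*}

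First I would Taylor-expand the Gaussian features in the small parameter $\epsilon$. With the rescaled variables $\hat t=(t-t_{\ell-1})/h$, $\hat\tau_j\in[0,1]$ and $\theta_j\in[0,1]$, one has $\phi(t,\tau_j,\theta_j)=\exp(-\epsilon\theta_j(\hat t-\hat\tau_j)^2)\to 1$ and $\phi'\to 0$ uniformly in $\omega$. Consequently $\Phi^{(\ell)}(t_\ell)\to\mathbf{1}_{N}$ and every column of $\Psi^{(\ell)}$ collapses to the same vector, yielding the rank-one limit $\Psi^{(\ell)}\to\mathbf{1}_{N}v^{T}$ with $v_{i}=1-z\zeta_{i}$. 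Because these limits no longer depend on the random draws of the shape parameters, the convergence of $S$ to its deterministic limit $S_{\infty}$ holds pointwise in $\omega$, hence almost surely.

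The second step is the matrix-algebra computation on this rank-one limit. Since $\Psi^{(\ell)}(\Psi^{(\ell)})^{T}\to\mu\,\mathbf{1}_{N}\mathbf{1}_{N}^{T}$ with $\mu:=\|v\|^{2}$, Sherman--Morrison (or the observation that $\mathbf{1}_{N}$ is the unique nontrivial eigendirection of $\mathbf{1}_{N}\mathbf{1}_{N}^{T}$) yields
\begin{equation*}
S_{\infty}=1+z\,\frac{sN}{\delta+\mu N},\qquad s:=v^{T}\mathbf{1}_{M}=\sum_{i=1}^{M}(1-z\zeta_{i}).
\end{equation*}
Letting $\delta\to 0^{+}$ reduces this to $S_{\infty}=(\mu+zs)/\mu$. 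Using the elementary power sums $\sum_{i=1}^{M}\zeta_{i}=(M+1)/2$ and $\sum_{i=1}^{M}\zeta_{i}^{2}=(M+1)(2M+1)/(6M)$, I obtain the closed forms $s=M-z(M+1)/2$ and $\mu=M-z(M+1)+z^{2}(M+1)(2M+1)/(6M)$, whence $\mu+zs=M-z-z^{2}(M^{2}-1)/(6M)$. The inequality $|S_{\infty}|<1$ is equivalent to $\mu^{2}-(\mu+zs)^{2}>0$, i.e., to $(-zs)(2\mu+zs)>0$. For $z<0$ each summand of $s$ is positive, so $-zs>0$; and a direct computation gives $2\mu+zs=2M-z(M+2)+z^{2}(M+1)(M+2)/(6M)$, which is a sum of three strictly positive terms whenever $z<0$. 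This establishes $|S_{\infty}|<1$ and hence asymptotic stability in the limit.

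The principal obstacle I anticipate is the rank-one degeneracy of the limiting Gram matrix $\Psi^{(\ell)}(\Psi^{(\ell)})^{T}$: the operations $\epsilon\to 0$ and matrix inversion do not commute when $\delta=0$, so the cleanest route is to keep $\delta>0$ throughout the limit and only send $\delta\to 0^{+}$ at the end, which is justified by the resolvent identity together with the eigenvalue separation of $\delta I+\mu\,\mathbf{1}_{N}\mathbf{1}_{N}^{T}$. A secondary technical point is to quantify the rate of convergence $S(\theta,\omega)\to S_{\infty}$ in $\epsilon$: with the uniform bound $\theta_{j}\in[0,1]$, the Taylor remainder is $O(\epsilon)$ uniformly in $\omega$, so the strict inequality $|S_{\infty}|<1$ lifts to $|S(\theta,\omega)|<1$ almost surely for all sufficiently small $\epsilon$, which is exactly the conclusion stated in the theorem.
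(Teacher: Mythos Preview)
Your proposal is correct and follows essentially the same strategy as the paper: invoke Proposition~\ref{KALAMATA-PROPOSITION}, expand the Gaussian features in $\epsilon=a_{U}h^{2}$, pass to the rank-one limit, then let $\delta\to 0^{+}$ and verify $|S_{\infty}|<1$ by direct algebra. Your closed forms $s$, $\mu$, $\mu+zs$ and $2\mu+zs$ are exactly the paper's quantities $Mz\bar{J}_{0}$, $Mz^{2}\mathfrak{S}_{2}$, etc., rescaled, so the final inequality you check is equivalent to the paper's.

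Two remarks on execution. First, your use of Sherman--Morrison on the rank-one limit $\mathbf{1}_{N}v^{T}$ is cleaner than the paper's route, which instead factors $M(\lambda h)^{2}$ out of $K^{(0)}$ and $\Psi^{(0)}\mathbf{1}_{M}$ explicitly before invoking the pseudoinverse limit \eqref{11-7-2025-2}; both arrive at the same place. Second, and more substantively, the paper spends considerable effort showing that the first-order corrections $\epsilon K^{(1)}$, $\epsilon\Psi^{(1)}$, $\epsilon\Phi^{(1)}$ are $O(1)$ \emph{uniformly in $M$ and $|\lambda h|$} after the common factor $M(\lambda h)^{2}$ is extracted. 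Your remainder estimate ``$O(\epsilon)$ uniformly in $\omega$'' is correct for the $\phi$-entries, but the $\psi$-entries carry a factor $1-\lambda h\zeta_{i}$, so the absolute perturbation in $\Psi$ is $O(\epsilon(1+|\lambda h|))$, not $O(\epsilon)$. This does not invalidate your argument for the limit statement as written (where $\lambda,h,M$ are held fixed while $\epsilon\to 0$), but it is precisely the point the paper's more laborious bookkeeping is designed to control, and it is what justifies the Remark's choice of $a_{U}$ in the stiff regime $|\lambda h|\gg 1$.
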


\begin{remark}
There is a certain freedom in the choice of $a_{U}$ which can guarantee that $\epsilon$ is small, even for $h$ quite large,  so that the perturbative approach adopted for the proof of this result is valid. A suitable choice, can be \begin{equation}
a_U=\frac{|\lambda | h^{-1}}{N + |\lambda| h + (|\lambda| h)^2+(|\lambda| h)^3}, \quad M<<N.
\end{equation}
\end{remark}


\begin{proof}
We will use the results of Proposition \ref{KALAMATA-PROPOSITION}
for the stability index, 
by considering the Gaussian RBFs as defined in \eqref{eq:trsol-0}, setting the shape parameters as stated above,

Rewriting the centers of the RBFs and the other parameters as:
\begin{equation}
\vspace{-2mm}
\tau_j^{(\ell)}=t_{\ell-1}+\xi_j^{(\ell)}h, \,\,\, \xi_{j}^{(\ell)}=\frac{j}{N}, \,\,\, \zeta_{i}^{(\ell)}=\frac{i}{M},  \,\,\, \gamma_{ji} = (\zeta_i^{(\ell)} -\xi_j^{(\ell)}) = \bigg( \frac{i}{M} - \frac{j}{N}\bigg).
    \label{eq:deftau}
\end{equation}
We now perform a careful expansion of the stability index $S_{i}$ in terms of the small parameter $\epsilon := a_U h^2$.  For ease of notation we drop  superscript $\ell$ as well as the explicit notation $a_U(\lambda, h, N)$, emphasizing however, that this dependence is still taken into account.

We next have the following expansions in terms of $\epsilon$ for the matrices  $\Phi=(\phi_{ji})$, $\Phi=(\phi_{ji})$, and   $K=(K_{j_1 j_2}) = \Psi \Psi^{T}$ 
\begin{equation}\label{16-5-2025-AAA}
\begin{aligned}
\phi_{ji} =\exp(- a_{U} h^2 \,\frac{1}{2}  \theta_j \gamma_{ji}^2 )= 1 - a_{U} h^2 \,\frac{1}{2}  \theta_j \gamma_{ji}^2 + O((a_U h^2)^2) = 1 -\epsilon \frac{1}{2}  \theta_j \gamma_{ji}^2 + O(\epsilon^2),\\
\psi_{ji}
=\underbrace{\bigg( 1- \lambda h \zeta_{i}\bigg)}_{\psi_{ji}^{(0)}} + \epsilon \underbrace{ \bigg( -  \theta_j \zeta_i \gamma_{ji} - \frac{1}{2}  \theta_j \gamma_{ji}^2 (1- \lambda h \zeta_i)  \bigg) }_{\psi_{ji}^{(1)}} +  \epsilon^2 \underbrace{  \frac{1}{2}  \theta_j^2 \zeta_i \gamma_{ji}^2 }_{\psi_{ji}^{(2)}} + O(\epsilon^3)\\
K_{j_1 j_2} = \sum_{\ell =1}^{ M} \psi_{j_1 \ell} \psi_{\ell j_2}^{T} =\sum_{\ell =1}^{ M} \psi_{j_1\ell} \psi_{j_2 \ell } 
= \underbrace{ \sum_{\ell=1}^{M} \psi_{j_1\ell}^{(0)}\psi_{j_2\ell}^{(0)} }_{K_{j_1 j_2}^{(0)}}  + \epsilon \underbrace{ \bigg( \sum_{\ell=1}^{M} \bigg[  \psi_{j_1\ell}^{(1)} \psi_{j_2 \ell}^{(0)} + \psi_{j_1 \ell}^{(0)} \psi_{j_2 \ell}^{(1)}  \bigg]  \bigg)  }_{K_{j_1 j_2}^{(1)}} + O(\epsilon^2)
\end{aligned}
\end{equation}

Introducing these expansions in the stability index yields
\begin{equation}\label{12-3-2025-17-4-2025-aaa2-AAA}
S_{i} = 1 + \lambda h \zeta_i \bigg( \Phi_i^{(0)T} + \epsilon \Phi^{(1)T} + O(\epsilon^2) \bigg) \bigg ( K^{(0)} + \delta I  + \epsilon K^{(1)} + O(\epsilon^2) \bigg)^{-1} \bigg( \Psi^{(0)} + \epsilon \Psi^{(1)} +O(\epsilon^2)\bigg) {\bf 1}_{M \times 1}.
\end{equation}
We aim to pass to the limit as $\epsilon \to 0$. This, however requires a careful consideration of the terms $ \epsilon \Phi^{(1)T}$, $\epsilon \Psi^{(1)}$ and $\epsilon K^{(1)}$, taking into account that the terms multiplying $\epsilon$ may be large due to their dependence of parameters such as $\lambda$, $M$, $N$.
To this end, before taking the limit $\epsilon \to 0$ in \eqref{12-3-2025-17-4-2025-aaa2-AAA} we perform a careful analysis of this expression, in particular with respect to parameters which are potentially large $\lambda, M, N$, and check the validity of the proposed expansion. In doing so we will also examine terms of higher order  than those needed for the zeroth order result.
We start by expressing
 $\psi_{ji}$ in terms of
\begin{equation}
\begin{aligned}
\psi_{ji}
= +\lambda h \bigg\{  - \zeta_{i} \bigg( 1 - \frac{\epsilon}{2}\theta_{j} \gamma_{ji}^2  \bigg) + (\lambda h)^{-1}  \bigg(1  - \epsilon \theta_j\zeta_i \gamma_{ji} - \epsilon \frac{1}{2}\theta_j \gamma_{ji}^2   \bigg) \bigg\} + O(\epsilon^2)
\end{aligned}
\end{equation}
so that
\begin{equation}\label{11-7-2025-1}
\begin{aligned}
K^{(0)}= M (\lambda h)^2 \bigg\{  {\mathfrak S}_{2}^{(0)} + (\lambda h)^{-1}  {\mathfrak S}_{2}^{(1)} + (\lambda h)^{-2}  {\mathfrak S}_{2}^{(2)}    \bigg\} {\bf 1}_{N \times N} =: M (\lambda h)^2 {\mathfrak S}_{2}  {\bf 1}_{N \times N}, \\
{\mathfrak S}_{2}^{(0)} =   \frac{1}{3}\bigg( 1 + \frac{3}{2} \frac{1}{M} + \frac{1}{2 M^2}  \bigg),  \quad
{\mathfrak S}_{2}^{(1)}= 
  - \bigg( 1 + \frac{1}{M} \bigg), \quad
{\mathfrak S}_{2}^{(2)}=1,  
\\
{\mathfrak S}_2 := {\mathfrak S}_{2}^{(0)} + (\lambda h)^{-1} {\mathfrak S}_{2}^{(1)} + (\lambda h)^2 \ge 0, \,\,\, \forall \,\, M, \lambda, h.
\end{aligned}
\end{equation}
Concerning the next order term in $K$ we have
\begin{equation}
\begin{aligned}
K_{j_1 j_2}^{(1)}
=- M (\lambda h)^2 \underbrace{\bigg( \theta_{j_1}\bigg\{ e_{j_1}^{(0)} + (\lambda h)^{-1} e_{j_1}^{(1)} + (\lambda h)^{-2} e_{j_1}^{(2)} \bigg\} +  \theta_{j_2}\bigg\{ e_{j_2}^{(0)} + (\lambda h)^{-1} e_{j_2}^{(1)} + (\lambda h)^{-2} e_{j_2}^{(2)} \bigg\} \bigg) }_{\bar{K}^{(1)}_{j_1 j_2} }
\end{aligned}
\end{equation}
where
\begin{equation}
\begin{aligned}
 e_{j}^{(0)}:=   \bigg(1 + \frac{1}{M}\bigg) \bigg\{+\frac{1}{10}(1 + \frac{1}{2 M} ) ( 1 + \frac{1}{M}- \frac{1}{3 M^2}) - \frac{1}{4} \,\frac{j}{N}\,\left(1 + \frac{1}{M}\right)
+\frac{1}{6}\, \bigg(\frac{j}{N}\bigg)^2 \,(1 + \frac{1}{2 M}) \bigg\}, \\
 e_{j}^{(1)}:=\left[1 + \frac{1}{M}\right] \bigg\{-\,\frac{1}{4}\,\bigg(1 + \frac{1}{M}\bigg)
+ \,  \frac{j}{N}  \bigg( 1 + \frac{1}{2 M}\bigg) 
-\frac{1}{2} \, \bigg(\frac{j}{N} \bigg)^2 \,  \bigg\}, \\
 e_{j}^{(2)}:=   \frac{1}{2}\bigg(1 +  \frac{1}{M}\bigg)  \frac{1}{2}\bigg( 1 + \frac{1}{2 M} \bigg)
- \frac{j}{N}\bigg(1 + \frac{1}{M} \bigg) 
+ \frac{1}{2}\bigg(\frac{j}{N}\bigg)^2
\end{aligned}
\end{equation}
or in more compact matrix form $K^{(1)}=-M (\lambda h)^2 \bar{K}^{(1)}$, where $\bar{K}^{(1)}=(\bar{K}^{(1)}_{j_1 j_2})$.

We emphasize that for any $M$, and $\lambda h$ the terms ${\mathfrak S}_{2}^{\infty, k}$, $e_{j}^{\infty, k}$, $k=0, 1, 2$ are $O(1)$, and likewise holds for $\bar{K}^{(1)} = O(1)$. Moreover, there is a common factor of $M$, which multiplies combinations of the above terms. This fact yields some interesting cancelation of the dependence of $M$ allowing the results to hold for any $M$. Indeed, note that
(see \eqref{11-7-2025-1})

\color{black}
\begin{equation}
\begin{aligned}
K  + \delta I= M (\lambda h)^2 \bigg[ {\mathfrak S}_{2}  {\bf 1}_{N \times N}  - \epsilon \bar{K}^{(1)} + \frac{\delta}{M (\lambda h)^2 } I \bigg],
\end{aligned}
\end{equation}
so that
\begin{equation}\label{14-6-2025-1}
\begin{aligned}
(K  + \delta I)^{-1}= M^{-1} (\lambda h)^{-2} \bigg[   {\mathfrak S}_{2} {\bf 1}_{N \times N}  - \epsilon \bar{K}^{(1)} + \frac{\delta}{M (\lambda h)^2}  I\bigg]^{-1},
\end{aligned}
\end{equation}
\color{black}
We now carefully calculate the elements of the vector $(\Psi^{(0)}+ \epsilon \Psi^{(1)} ) {\bf 1}_{M \times 1}$. 
For the zeroth order tem we have
\begin{equation}
\begin{aligned}
(\Psi^{(0)} {\bf 1}_{M \times 1})_{j} = \sum_{i=1}^{M}(1- \lambda h \zeta_i) =
M \lambda h \bar{J}_{0}, \\
\bar{J}_{0}= -\bigg[  \frac{1}{2}(1+ \frac{1}{M}) -  (\lambda h)^{-1} \bigg],
\end{aligned}
\end{equation}
and it is important to note that the term $\bar{J}_{0}$  is O(1) for any $M$ and any $\lambda h$ no matter how large it is.  
Note that there is no dependence on $j$ above so 
\begin{equation}
\Psi^{(0)} {\bf 1}_{M \times 1} = M \lambda h  \bar{J}_{0} {\bf 1}_{N \times 1}.
\end{equation}

A similar, yet more tedious calculation yields that
\begin{equation}
\begin{aligned}
(\Psi^{(0)} {\bf 1}_{M \times 1})_{j}  = M \lambda h  \bigg\{ (\lambda h)^{-1} \bar{V}_{1, j} +(\lambda h)^{-1} \bar{V}_{2,j} + \bar{V}_{3,j} \bigg\}
\end{aligned}
\end{equation}
where
\begin{equation}
\begin{aligned}
\bar{V}_{1,j}= \theta_j \bigg(1 + \frac{1}{M} \bigg) \bigg\{ -\frac{1}{3}\bigg( 1 + \frac{1}{2M} \bigg) + \frac{1}{2} \frac{j}{N} \bigg\}, \\
\bar{V}_{2,j}=\frac{1}{2} \theta_{j}\bigg\{ -\frac{1}{2} \bigg(1 + \frac{1}{M}\bigg) \bigg(1 + \frac{1}{2 M} \bigg) + \frac{j}{N} \bigg( 1 + \frac{1}{M}\bigg) -\bigg( \frac{j}{N}\bigg)^2 \bigg\}, \\
\bar{V}_{3,j}=\theta_j\bigg( 1 + \frac{1}{M} \bigg) \bigg\{ \frac{1}{2} \bigg( 1 + \frac{1}{M} \bigg)  - \frac{1}{3}\frac{j}{N} \bigg( 1 + \frac{1}{2 M}\bigg) + \frac{1}{4} \bigg( \frac{j}{N}\bigg)^2 \bigg\}.
\end{aligned}
\end{equation}
We define the corresponding vectors $\bar{V}_{k}=(\bar{V}_{k, j}, \,\, j=1, \cdots, N)$, $k=1,2,3$ as well as their sum $\bar{V}=\sum_{k=1}^{3} \bar{V}_{k}$.
Although stated explicitly, the exact form of these coefficients in immaterial here, but it is important to note that $\bar{V}_{k, j}$, $k=1,2,3$ are all $O(1)$ for any $M$ and any $j=1, \cdots, N$. Likewise for the vector $\bar{V}$ defined above. 

We conclude that
\begin{equation}\label{14-6-2025-2}
\begin{aligned}
 \bigg( \Psi^{(0)} + \epsilon \Psi^{(1)} \bigg) {\bf 1}_{M \times 1} = M \lambda h \bigg( \bar{J}_{0} {\bf 1}_{N \times n} + \epsilon \bar{V} \bigg)
\end{aligned}
\end{equation}
so that using \eqref{14-6-2025-1} and \eqref{14-6-2025-2}
\begin{equation}
\begin{aligned}
S_{i} = 1 + \lambda h \zeta_i \bigg( \Phi_i^{(0)T} + \epsilon \Phi^{(1)T} \bigg) \bigg ( K^{(0)} + \delta I  + \epsilon K^{(1)} \bigg)^{-1} \bigg( \Psi^{(0)} + \epsilon \Psi^{(1)} \bigg) {\bf 1}_{M \times 1} \\
=1 + \zeta_i \bigg( \Phi_i^{(0)T} + \epsilon \Phi^{(1)T} \bigg) \times  
\bigg[  {\mathfrak S}_{2} {\bf 1}_{N \times N}  - \epsilon \bar{K}^{(1)} + \frac{\delta}{M (\lambda h)^2}  I\bigg]^{-1} \times  \bigg( \bar{J}_{0} {\bf 1}_{N \times 1} + \epsilon \bar{V} \bigg),
\end{aligned}
\end{equation}
where all terms  multiplying $\epsilon$ are now of $O(1)$ for  any choice of $M$ and $\lambda h$ (as long as it is bounded away from $0$).

We can now pass to the limit as $\epsilon \to 0$. Since $\epsilon \bar{K}^{(1)}, \epsilon \Phi_{i}^{(1)T}, \epsilon \bar{V} \to 0$ in this limit, we obtain that as $\epsilon \to 0$, the stability index tends to
\begin{equation}
\begin{aligned}
S_{i} = 1 + \zeta_i {\bf 1}_{1 \times N} \bigg[  {\mathfrak S}_{2}  \underbrace{{\bf 1}_{N \times N} }_{ {\bf 1}_{1 \times N}^{T} {\bf 1}_{1 \times N}}   + \frac{\delta}{M (\lambda h)^2}  I\bigg]^{-1} \times   \bar{J}_{0} {\bf 1}_{N \times 1}  \\
\end{aligned}
\end{equation}

We now take the limit as 
$\delta \to 0$ (or $M \to \infty$ or $|\lambda h| \to \infty$) using \eqref{11-7-2025-2} for $A= {\mathfrak S}_2^{1/2} 1_{1 \times N}^{T}$ which yields
\begin{equation}
\begin{aligned}
S_i = 1 + \zeta_i {\mathfrak S}_2^{-1} \bar{J}_0 {\bf 1}_{N \times 1}^{\dagger} {\bf 1}_{N \times 1} = 1 + \zeta_i {\mathfrak S}_2^{-1} \bar{J}_0 
=  1 - \zeta_i\frac{\bigg\{ \frac{1}{2}(1+ \frac{1}{M}\bigg) - (\lambda h)^{-1} \bigg\}  }{\bigg\{  \frac{1}{3} \bigg( 1 + \frac{3}{2} \frac{1}{M} + \frac{1}{2 M^2}  \bigg) - (\lambda h)^{-1} \bigg( 1 + \frac{1}{M} \bigg) + (\lambda h)^{-2}  \bigg\}   }
\end{aligned}
\end{equation}

Since the denominator always keeps it sign it can be seen that  for $\lambda >0$ and such that $(\lambda h)^{-1} > \frac{1}{2} ( 1 + \frac{1}{M})$ the scheme is unstable as expected.

On the other hand for $\lambda < 0$, the scheme is always stable since in this case setting $s=(|\lambda| h)^{-1}$ we see that $S_i < 1$ but at the same time $S_{i} > -1$ for all $i=1, \cdots, M$. This last on can be seen since
\begin{equation}
\begin{aligned}
-1 < S_{i} \,\, \Longleftrightarrow \,\, \zeta_i \frac{\bigg\{ \frac{1}{2}(1+ \frac{1}{M}\bigg) - (\lambda h)^{-1} \bigg\}  }{\bigg\{  \frac{1}{3} \bigg( 1 + \frac{3}{2} \frac{1}{M} + \frac{1}{2 M^2}  \bigg) - (\lambda h)^{-1} \bigg( 1 + \frac{1}{M} \bigg) + (\lambda h)^{-2}  \bigg\}   } < 2
\end{aligned}
\end{equation}
which will hold for all $i=1, \cdots, M$ as long as it holds for $i=M$ as it is equivalent to
\begin{equation}
\begin{aligned}
 0 < 
\bigg( \frac{1}{3}  + \frac{2}{3} \frac{1}{M} + \frac{1}{3 M^2} \bigg) 
+ s \bigg( 1 + \frac{2}{M} \bigg) + 2 s^2,
\end{aligned}
\end{equation}
which holds for all $s=|\lambda h| > 0$ and $M \in {\mathbb N}$.
\end{proof}

\color{black}
\begin{remark}
As also demonstrated by numerical simulations depicted in Figure \ref{fig:absolute_stability_region_multicoll} for various choices of $N$ and $M$, the multi-collocation RPNN scheme is A-stable (recall that a numerical method for an ODE is called \textit{A-stable} if its region of absolute stability includes the entire left half of the complex plane).
\end{remark}
\begin{figure}[ht!]
\vspace{-2mm}
    \centering
    \subfigure[$M=4, N=12$]{\includegraphics[trim={0cm 0cm 1cm 0.5cm},clip,width=0.31\textwidth]{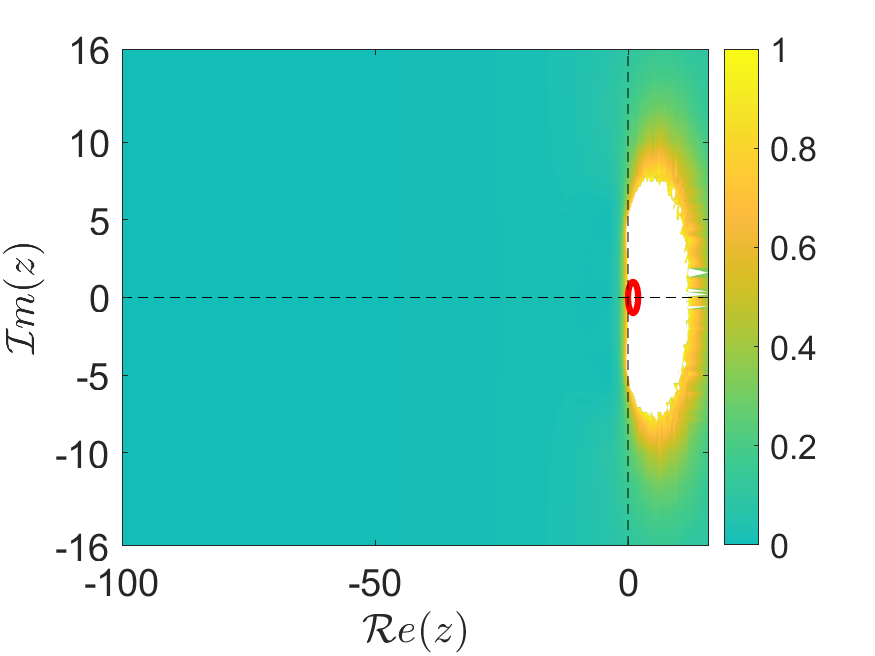}}
    \subfigure[$M=10,N=30$]{\includegraphics[trim={0cm 0cm 1cm 0.5cm},clip,width=0.31\textwidth]{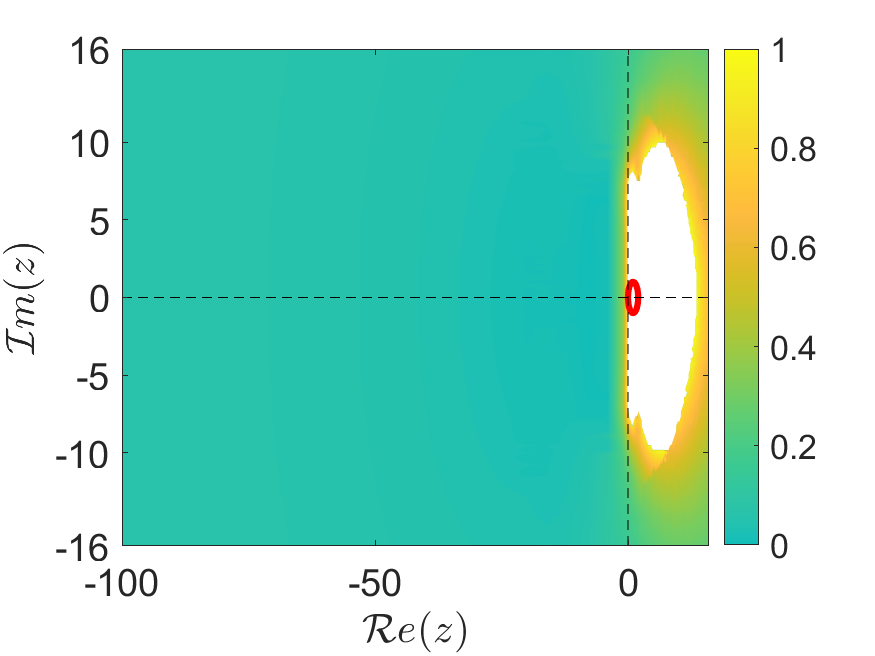}}
    \subfigure[$M=50, N=150$]{\includegraphics[trim={0cm 0cm 1cm 0.5cm},clip,width=0.31\textwidth]{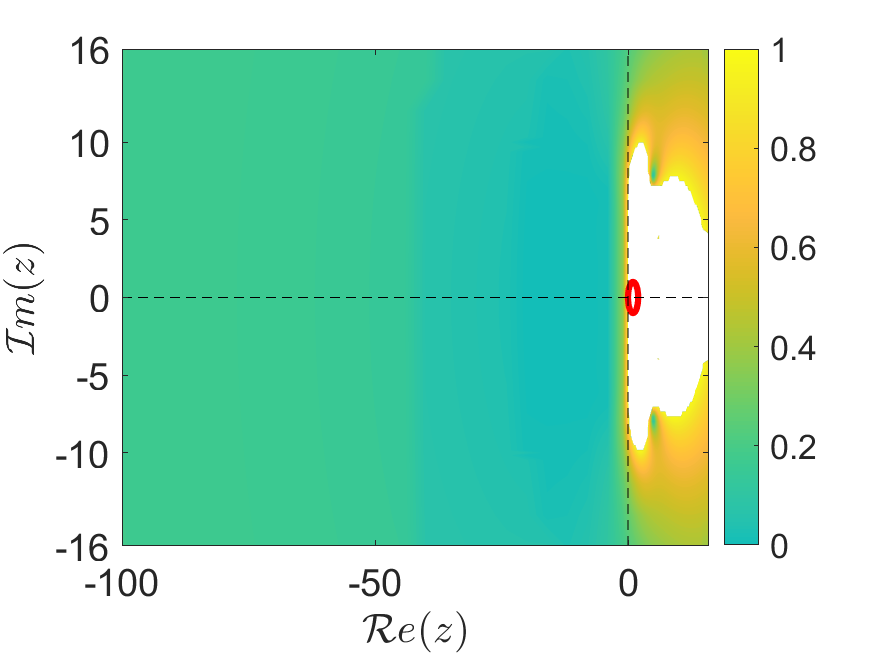}}
    \subfigure[$M=4, N=12$ (zoom)]{\includegraphics[trim={0cm 0cm 1cm 0.5cm},clip,width=0.31\textwidth]{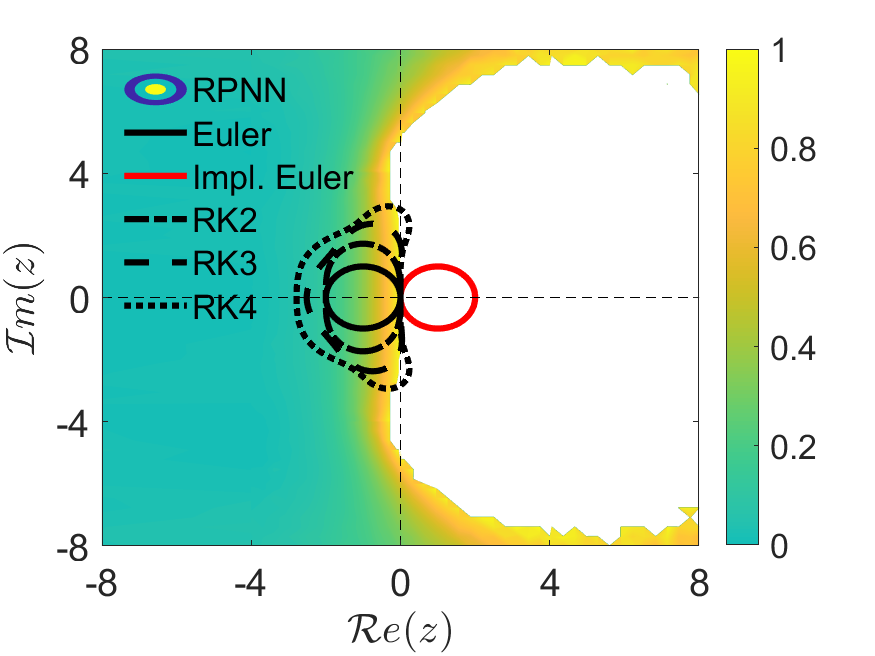}}
    \subfigure[$M=10,N=30$ (zoom)]{\includegraphics[trim={0cm 0cm 1cm 0.5cm},clip,width=0.31\textwidth]{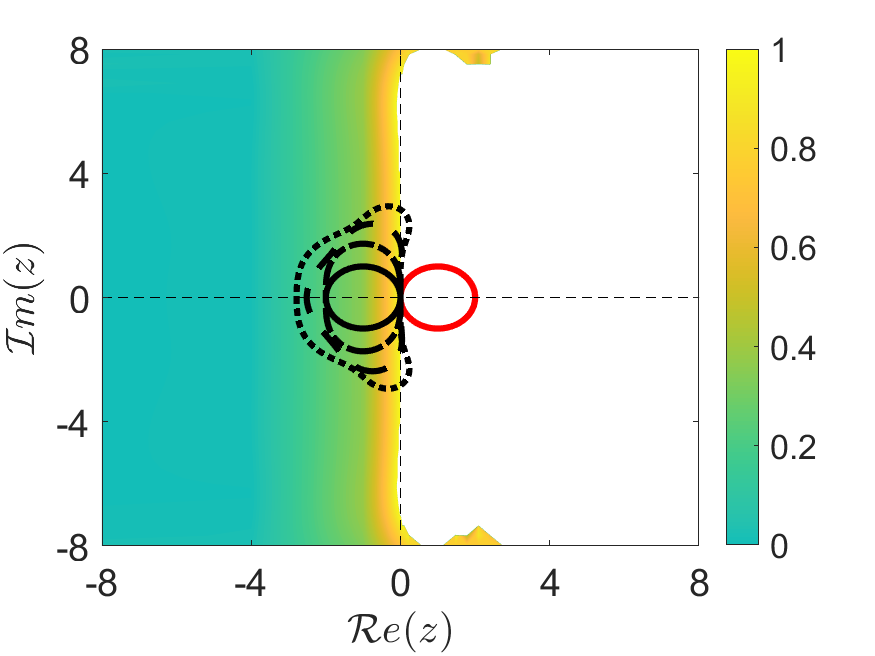}}
    \subfigure[$M=50, N=150$ (zoom)]{\includegraphics[trim={0cm 0cm 1cm 0.5cm},clip,width=0.31\textwidth]{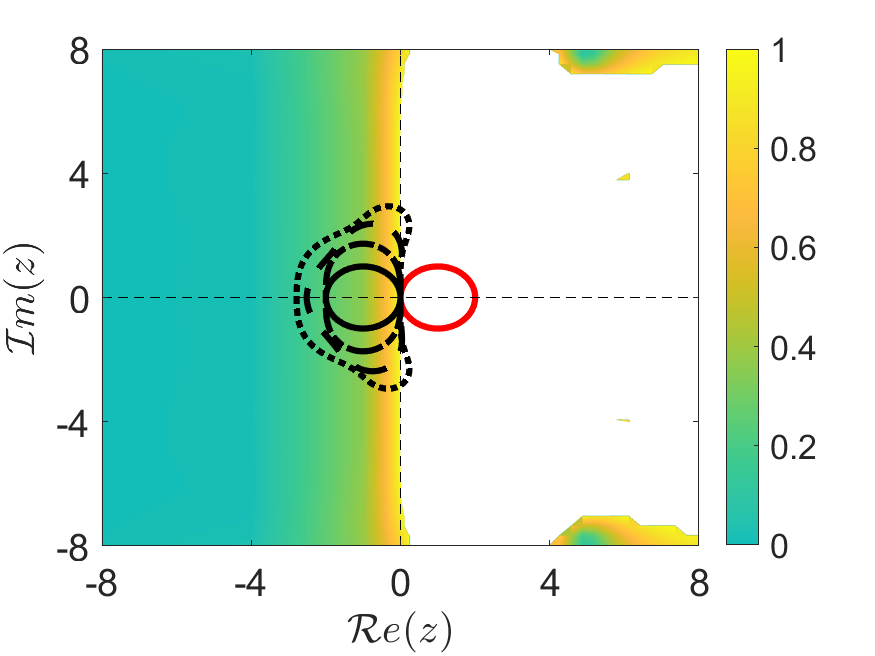}} \vspace{-3mm}
    \caption{Multi-collocation physics-informed RPNN for the solution of the scalar linear ODE. Absolute, stability region of the multicollocation RPNN (colored zone), with numerical simulations.
    We set in (a),(d) $M=4$; in (b),(e) $M=10$; in (c),(f) $M=50$; In all cases we fixed $N=3M$.
    We report the absolute stability function $|S|$ in the range $[0,1]$. The white zone indicates $|S|>1$ (unstable region). We report maximum values of the stability function, 
    over $200$ Monte-Carlo runs. A comparison with the stability domains of explicit and implicit Euler, and Runge-Kutta methods of orders 2, 3 and 4 (contour lines) are also given. 
    We used a non-uniform mesh with approximately 100 by 100 points covering the complex plane region in  $\big\{z=(\mathcal{R}e(z),\mathcal{I}m(z))[-100,16]\times[-16,16]\big\}$, with increased density near zero and near the boundaries of the stability region in both real and imaginary parts, and extending far left to capture high stiffness.       \label{fig:absolute_stability_region_multicoll}}
\end{figure}

\subsubsection{Consistency of PI-RPNNs for linear scalar ODEs}
\begin{theorem} Let $u(\cdot)$ be the exact solution of \eqref{graf-2025} with $\lambda<0$,  and $\hat{u}(\cdot)$ be the approximate solution provided by the multi-collocation RPNN
 given by \eqref{eq:trsol-0} with weights determined by \eqref{OPT-24-2-205}. Then,
\begin{eqnarray*}
{\cal E}_{i}=u_{\ell}(c_i) - \hat{u}_{\ell}(c_i) \to 0, \,\,\, \mbox{as} \,\,\, h \to 0, \forall \delta >0, M.
\end{eqnarray*}
Moreover, in the limit as $\delta \to 0$ or $M \to \infty$ it satisfies the property
\vspace{-2mm}
\begin{eqnarray*}
\frac{{\cal E}_i}{h} \to 0 \,\,\, \mbox{as} \,\,\, h \to 0,
\end{eqnarray*}
i.e. it is consistent for the solution of the IVP: $\frac{du(t)}{dt}=\lambda u, \quad u(t_0)=u_0$ in the limit as $M \to \infty$.
\end{theorem}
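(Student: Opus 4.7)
(Sketch)
The natural plan is to view the multi-collocation RPNN as a one-step method with amplification factor $S_{i}$ (the stability index computed in the proof of Theorem~\ref{PAGKRATI-STABILITY-THM}) and to compare it against the exact propagator $e^{\lambda h \zeta_{i}}$ on $[t_{\ell-1}, t_{\ell-1}+h\zeta_{i}]$. For a local truncation analysis we may set $\hat{u}_{\ell-1}=u_{\ell-1}$, which reduces the problem to estimating
\begin{equation*}
{\cal E}_{i} \;=\; u_{\ell-1}\bigl(e^{\lambda h \zeta_{i}} - S_{i}\bigr)
\end{equation*}
in the relevant limits of $h$, $\delta$, $M$ (and the auxiliary small parameter $\epsilon=a_{U}h^{2}$ introduced in the stability proof).

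For the first claim (${\cal E}_{i}\to 0$ as $h\to 0$ for any fixed $\delta>0$ and $M\in\mathbb{N}$), I would use the explicit formula \eqref{peroni-1000}. The prefactor $(t-t_{\ell-1})$ tends to zero, while the vector $\Phi^{(\ell)}$, the matrix $\Psi^{(\ell)}$, and the bracket $(\Psi^{(\ell)}(\Psi^{(\ell)})^{T}+\delta I)^{-1}$ remain a.s.~bounded in the limit $h\to 0$ (the Tikhonov regularisation $\delta I$ controls the invertibility even though the Gaussian features degenerate to constants). Hence $S_{i}\to 1$ a.s., and since $e^{\lambda h \zeta_{i}}\to 1$ as well, ${\cal E}_{i}\to 0$. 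The bounds are deterministic in $h$ once the shape parameters are supported on a bounded set, so this convergence carries over under the expectation.

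For the consistency claim I would work with the closed-form expression for $S_{i}$ obtained at the end of the proof of Theorem~\ref{PAGKRATI-STABILITY-THM} (valid after letting $\epsilon\to 0$ and either $\delta\to 0$ or $M\to\infty$), which, after multiplying numerator and denominator by $(\lambda h)^{2}$, can be rewritten as the rational (Pad\'e-type) expression
\begin{equation*}
S_{i} \;=\; 1 + \zeta_{i}\,\lambda h\;\frac{1 - \tfrac{1}{2}\bigl(1+\tfrac{1}{M}\bigr)\lambda h}{1 - \bigl(1+\tfrac{1}{M}\bigr)\lambda h + \tfrac{1}{3}\bigl(1+\tfrac{3}{2M}+\tfrac{1}{2M^{2}}\bigr)(\lambda h)^{2}}.
\end{equation*}
A direct Taylor expansion in $\lambda h$ yields $S_{i} = 1 + \zeta_{i}\lambda h + \tfrac{1}{2}\zeta_{i}(\lambda h)^{2} + O((\lambda h)^{3})$, while $e^{\lambda h \zeta_{i}} = 1 + \zeta_{i}\lambda h + \tfrac{1}{2}\zeta_{i}^{2}(\lambda h)^{2} + O(h^{3})$. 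Subtracting and dividing by $h$ gives
\begin{equation*}
\frac{{\cal E}_{i}}{h} \;=\; -\,u_{\ell-1}\,\tfrac{1}{2}\,\zeta_{i}(1-\zeta_{i})\,\lambda^{2}\,h + O(h^{2}) \;\longrightarrow\; 0,
\end{equation*}
so the scheme is at least first-order consistent (and the leading error vanishes at the endpoint $\zeta_{M}=1$, reflecting the degree-two agreement of the Pad\'e factor at that node).

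The main obstacle is coordinating the three limiting parameters $h\to 0$, $\epsilon=a_{U}h^{2}\to 0$, and one of $\delta\to 0$ or $M\to\infty$ so that the perturbative expansion underlying the closed form for $S_{i}$ remains valid uniformly. Concretely, the choice of $a_{U}$ proposed in the remark after Theorem~\ref{PAGKRATI-STABILITY-THM} must ensure that the $O(\epsilon)$ remainders collected in \eqref{16-5-2025-AAA}--\eqref{14-6-2025-2} contribute to $S_{i}$ at an order that is $o(h)$ rather than $O(h)$; otherwise they would mask the first-order match with $e^{\lambda h \zeta_{i}}$. Formalising these uniform estimates (in particular an $\omega$-uniform bound on the operator-norm of the regularised inverse) is the main technical point. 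Once this is in place, the conclusion ${\cal E}_{i}/h\to 0$ follows by the Taylor comparison above.
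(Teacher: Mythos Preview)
Your argument for the first claim (bounded regularised inverse, $S_i\to 1$) is essentially the paper's: both rely on the $\delta I$ term to keep $(\Psi\Psi^T+\delta I)^{-1}$ bounded as the Gaussian features degenerate, and on the prefactor $\lambda h\zeta_i$ to drive ${\cal E}_i\to 0$.

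For the consistency claim your route differs from the paper's. The paper does \emph{not} pass through the rational stability formula from Theorem~\ref{PAGKRATI-STABILITY-THM}; instead it writes ${\cal E}_i=\lambda h\zeta_i\,A(h)\,u_{\ell-1}+O(h^2)$ with $A(h)=1-\Phi_i^T(\Psi\Psi^T+\delta I)^{-1}\Psi\,{\bf 1}_{M\times 1}$, and simply lets $h\to 0$ in the matrix entries: since $\phi_{ji}\to 1$ and $\psi_{ji}\to 1$, continuity of the regularised inverse gives
\[
A(h)\to A_0(\delta,M)=1-{\bf 1}_{1\times N}\bigl(M{\bf 1}_{N\times N}+\delta I\bigr)^{-1}M{\bf 1}_{N\times 1}=\frac{\delta}{\delta+MN},
\]
which vanishes as $\delta\to 0$ or $M\to\infty$. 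This is elementary and completely sidesteps the limit-coordination obstacle you flag: no rate on the $\epsilon$-remainders is needed, only the bare fact that $\epsilon\to 0$ when $h\to 0$. Your route via the Pad\'e-type expansion has the advantage of producing an explicit leading error constant (hence a concrete order of accuracy), but making it rigorous requires the $O(\epsilon)$ corrections to $S_i$ to be $o(h)$; with the paper's suggested $a_U\sim|\lambda|/(Nh)$ one has $\epsilon=a_U h^2=O(h)$, so this step is not automatic and is exactly where your identified obstacle bites. The paper's direct matrix-limit argument is therefore both shorter and free of this complication.
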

\begin{proof}
We will check consistency at any collocation point. In order to ease the notation, and since we are focusing on a single interval $[t_{\ell-1}, t_{\ell}]$ we will omit the supercripts $(\ell)$ from the representation \eqref{peroni-1000}, which now reads
\begin{equation}
\begin{aligned}
\hat{u}_{\ell}(c_i)=\big (1+ \lambda h \zeta_i \Phi_i^{T}(\Psi \Psi^T + \delta I)^{-1} \Psi {\bf 1}_{M \times 1} \big) u(t_{\ell -1}),
\end{aligned}
\end{equation}
where as mentioned above we assume that at $t_{\ell -1}$ we have the true solution $u(t_{\ell -1})$ and not the approximate solution $\hat{u}_{\ell -1}$ (as in \eqref{peroni-1000}).

We now estimate the error of the method at each collocation point by using
\begin{equation}\label{peroni-1001}
\begin{aligned}
{\cal E}_{i}= u_{\ell}(c_{i}) - \hat{u}_{\ell}(c_i) = 
u_{\ell}(c_{i}) -\big (1+ \lambda h \zeta_i \Phi_i^{T}(\Psi \Psi^T + \delta I)^{-1} \Psi {\bf 1}_{M \times 1} \big)  u_{\ell -1}.
\end{aligned}
\end{equation}
We now use Taylor's theorem around to point $t_{\ell -1}$ to obtain for the true solution that
\begin{equation}\label{peroni-1002}
u_{\ell}(c_i)= u(t_{\ell -1}) +  h \zeta_i u'_{\ell -1} + O(h^2) =
u_{\ell-1} + \lambda h \zeta_i u_{\ell -1} + O(h^2),
\end{equation}
where we used the fact that $u$ solves the ODE, and the regularity of the solution.  

Combining \eqref{peroni-1001} and \eqref{peroni-1002} we obtain that
\begin{equation}\label{peroni-1003}
\begin{aligned}
{\cal E}_{i}
= \lambda  h \zeta_i   \big(\underbrace{ 1 - \Phi_i^{T}(\Psi \Psi^T + \delta I)^{-1} \Psi {\bf 1}_{M \times 1} }_{:=A(h) }\big)\  + O(h^2).
\end{aligned}
\end{equation}
As long as the limit $\lim_{h \to 0^{+}} A(h)$ is finite then we can easily see from \eqref{peroni-1003} that $\lim_{h \to 0^{+}} {\cal E}_{i}=0$ from which consistency follows. 

To show that $A(h) \to c$, finite, in the limit as $h \to 0^{+}$, we work in the same spirit as in the proof of Theorem \ref{PAGKRATI-STABILITY-THM}. To facilitate the proof and the exposition we set $z=\lambda h$ and keeping $\lambda $ finite we take the limit as $h \to 0^{+}$, i.e. consider the limit as $z \to 0$ (we will allow $\lambda$ being either positive or negative, as we want to show consistency in the general case).
In the limit as $z \to 0^{\pm}$ we easily see that
\begin{equation}\label{peroni-3000}
\begin{aligned}
\Phi_{i}^{T}(z) \to {\bf 1}_{1 \times N}, \,\,\,\,\,
\Psi(z) \to {\bf 1}_{N \times M},
\end{aligned}
\end{equation}
We then express $A(h)$ as 
\begin{equation}
\begin{aligned}
A(z)=1 - \Phi_{i}^{T}(z) (\Psi(z)\Psi^{T}(z) + \delta I)^{-1} \Psi(z) {\bf 1}_{M \times 1}.
\end{aligned}
\end{equation}
Note that, using the notation $a(z)=h a_{U}(\lambda, h)$, where $a(z)$ is chosen so that $a(z) \to 0$ as $z \to 0$, from \eqref{peroni-3000} and the continuity of $z \to (\Psi(z) \Psi^{T}(z)+ \delta I)^{-1}$ we get
\begin{equation}
\begin{aligned}
A(z) \to A_0(\delta, M):= 1 -{\bf 1}_{1 \times N} ( {\bf 1}_{N \times M} {\bf 1}_{M \times N} + \delta I )^{-1} {\bf 1}_{N \times M}{\bf 1}_{M \times 1} 
\\
=1 - {\bf 1}_{1 \times N} \big( {\bf 1}_{N \times N} + \frac{\delta}{M}\big)^{-1} {\bf 1}_{N \times 1},
\end{aligned}
\end{equation}
which is definitely finite. Hence, \eqref{peroni-1003} yields that ${\cal E}_{i}(h) \to 0$ in the limit as $h \to 0^{+}$. 
In the limit as $\delta \to 0$ or $M \to \infty$, we see that  $A_0(\delta, M) \to 0$, and our second claim holds.
By similar arguments we may show that the same result holds for every $t \in [t_{\ell-1}, t_{\ell}]$, and not just on the collocation points (consistency in the uniform norm). 
\end{proof}
\subsection{Linear stability analysis of a system of ODEs}
\label{sec:system_stability}
We will now extend our stability analysis  to systems of ODEs. 
We will first provide a partial result for the multi-collocation scheme for the case where $A$ is diagonalizable (Theorem \ref{DIAG-T}), that will be of interest in its own right for the extension of our results to the PDE case.
\begin{proposition}\label{DIAG-T}
Consider the ODE system \eqref{eq:simplelinear0}, assume that $A$ is diagonalizable and let $\{\lambda_i, \,\,\, \cdots, \,\, i=1, \cdots, d\}$ be the eigenvalues of $A$. Then, the RPNN multi-collocation scheme (as in Theorem \ref{PAGKRATI-STABILITY-THM})  is asymptotically stable if $\max_{i} \lambda_i h  \to  0^{-}$ and unstable if $\lambda_i h \to 0^{+}$ for at least one $i$. Also for $\max_{i} \lambda_i h \to -\infty$, the scheme is asymptotically stable.
\end{proposition}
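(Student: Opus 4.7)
My plan is to reduce the system case to the scalar case that was just proved by exploiting the diagonalizability of $A$ and showing that the PI-RPNN scheme ``commutes'' with the corresponding similarity transformation. Write $A = P D P^{-1}$ with $D = \mathrm{diag}(\lambda_1,\ldots,\lambda_d)$, and introduce $\bm{v} := P^{-1} \bm{u}$. Then $\frac{d\bm{v}}{dt}= D\bm{v}$ is a decoupled system of $d$ scalar linear ODEs $\frac{dv^{(k)}}{dt}=\lambda_k v^{(k)}$, one for each eigenvalue. So the whole point is to show that, if we apply the multi-collocation PI-RPNN of Proposition \ref{KALAMATA-PROPOSITION} to the $\bm{u}$-system, the induced update on the transformed variables $\bm{v}_\ell := P^{-1}\hat{\bm{u}}_\ell$ is exactly the concatenation of the $d$ scalar PI-RPNN updates for the equations $\frac{dv^{(k)}}{dt}=\lambda_k v^{(k)}$.

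To carry this out, I would first note that the RPNN ansatz \eqref{eq:trsol-0} is applied component-wise with the \emph{same} random parameters $\{\theta_j,\tau_j\}$ for every component $k=1,\dots,d$, only the weights $w_{j,k}$ varying with $k$; equivalently one can write $\hat{\bm{u}}(t,\bm{W})= \bm{u}_0 + (t-t_0)\,\bm{W}\,\Phi^{(\ell)}(t)$, where $\bm{W}\in\R^{d\times N}$ collects the weights. Because the residuals \eqref{eq:Fq} with $\bm{f}(t,\bm{u})=A\bm{u}$ are linear in $\bm{u}$, the loss \eqref{OPT-24-2-205} separates into the sum over the columns of the transformed weight matrix $\tilde{\bm{W}} := P^{-1}\bm{W}$, each column solving exactly the scalar regularized least-squares problem of Proposition \ref{KALAMATA-PROPOSITION} with $\lambda$ replaced by $\lambda_k$. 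Hence the one-step propagator in the $\bm{v}$-coordinates is diagonal,
\begin{equation*}
\bm{v}_{\ell+1}(\omega) = \mathrm{diag}\bigl(S^{(1)}(\theta,\omega),\ldots,S^{(d)}(\theta,\omega)\bigr)\,\bm{v}_\ell(\omega),
\end{equation*}
where each $S^{(k)}$ is precisely the scalar stability index analyzed in Theorem \ref{PAGKRATI-STABILITY-THM} with $\lambda=\lambda_k$. Translating back to $\hat{\bm{u}}_\ell = P\bm{v}_\ell$, the system stability matrix is $S(\theta,\omega) = P\,\mathrm{diag}(S^{(k)})\,P^{-1}$, so its spectral radius equals $\max_k |S^{(k)}|$ and asymptotic stability is equivalent to $|S^{(k)}|<1$ for every $k$.

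The conclusions then follow directly by invoking Theorem \ref{PAGKRATI-STABILITY-THM} on each component: in the regime $a_U(\lambda_k,h,M)h^2\to 0$, the explicit formula derived there gives $|S^{(k)}|<1$ whenever $\lambda_k h <0$, and $|S^{(k)}|>1$ whenever $\lambda_k h > 0$ (and is large enough to trigger the instability condition $(\lambda_k h)^{-1} > \tfrac12(1+1/M)$). Therefore, if $\max_k \lambda_k h \to 0^-$ all $S^{(k)}$ lie strictly inside the unit disk and the system is asymptotically stable, while if $\lambda_k h \to 0^+$ for at least one $k$ the corresponding $|S^{(k)}|>1$ drives the whole scheme unstable. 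For the stiff limit $\max_k\lambda_k h \to -\infty$, the same scalar formula yields $S^{(k)} \to 1 - \zeta_i \cdot \tfrac{\tfrac12(1+1/M)}{\tfrac13(1 + \tfrac{3}{2M} + \tfrac{1}{2M^2})}$, which lies in $(-1,1)$ for every $i=1,\dots,M$ and every $M\in\N$, giving asymptotic stability a.s.

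The step I expect to be the main obstacle is the decoupling argument, specifically the claim that the random matrix $S(\theta,\omega)$ produced by the full-system PI-RPNN is truly similar to the diagonal matrix built from the scalar indices. This rests on using a \emph{single} common random feature bank $\{\phi^{(\ell)}_j\}$ across all components and on the linearity of both the residual and the Tikhonov regularizer; any deviation from these (e.g.\ coupling regularizers across components or using different random draws per component) would break the block diagonalization and force a genuinely multi-dimensional analysis of the pseudo-inverse matrix in \eqref{athens-2024}. Everything else is a direct citation of the scalar result.
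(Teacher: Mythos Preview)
Your overall strategy---diagonalize $A$, decouple into $d$ scalar problems, and invoke Theorem~\ref{PAGKRATI-STABILITY-THM} on each component---is exactly what the paper does. In fact the paper's proof is much terser than yours: it writes down the change of variables $\bm{z}=S\bm{u}$, notes that the system decouples into $\frac{dz_i}{dt}=\lambda_i z_i$, remarks that stability is governed by the eigenvalue of smallest modulus when all $\lambda_i\le 0$, and then simply cites Theorem~\ref{PAGKRATI-STABILITY-THM}. It makes no attempt to argue that the PI-RPNN update on the original $\bm{u}$-system is similar, via $P$, to the diagonal matrix $\mathrm{diag}(S^{(1)},\ldots,S^{(d)})$ of scalar stability indices.

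Where you go beyond the paper is in trying to establish that similarity, and there your key claim---that the regularized loss \eqref{OPT-24-2-205} separates over the rows of $\tilde{\bm W}=P^{-1}\bm{W}$---does not hold for a general diagonalizable $A$. Under the change of variables both terms in the loss pick up the Gram matrix $P^{T}P$: one has $\|{\bf F}_i\|_{\R^d}^2=(P^{-1}{\bf F}_i)^{T}P^{T}P(P^{-1}{\bf F}_i)$ and $\|\bm{W}\|_F^2=\mathrm{tr}\bigl(\tilde{\bm W}^{T}P^{T}P\,\tilde{\bm W}\bigr)$, so the minimization couples the components of $\bm{v}$ unless $P$ is orthogonal (i.e.\ $A$ is normal). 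This is precisely the obstacle you flag in your last paragraph, and linearity of the residual and of the Tikhonov term is not enough to dissolve it. The paper sidesteps the issue entirely by (implicitly) applying the scalar PI-RPNN directly to each equation of the decoupled $\bm{z}$-system, rather than arguing that the scheme on the coupled $\bm{u}$-system block-diagonalizes; under that reading your reduction and your invocation of the scalar theorem go through and match the paper's proof.
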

\begin{proof}
By the diagonalizability of $A$ there exists a similarity transformation $S$ such that $A=S^{-1} D S$ where $D=diag(\lambda_1, \cdots, \lambda_d)$, with $\lambda_i$, the eigenvalues of the matrix $A$. Upon defining $\bm{z}=S \bm{u}$, we can transform system \eqref{eq:simplelinear0} to a decoupled system
\begin{eqnarray}\label{TH-31}
\frac{d \bm{z}}{dt} = D \bm{z}, \,\,\,\,\, \bm{z}(0)=\bm{z}_0 := S \bm{u}_0, \quad \Longleftrightarrow \,\,\, \frac{d z_{i}}{dt} = \lambda_{i} z_{i}, \,\,\,\, i=1, \cdots, d.
\end{eqnarray}
If $\lambda_i \le 0$ for all $i$, the stability of the scheme is dominated by the eigenvalue with minimum value of $|\lambda_i|$. 
Then applying  Theorem \ref{PAGKRATI-STABILITY-THM} 
we obtain the stated result.
\end{proof}
For the non-diagonalizable case, we state the following Theorem.
\begin{theorem}\label{GENERAL-MATRIX-T}
For a general non-diagonalizable linear system of ODEs of the \eqref{eq:simplelinear0}, where $A$ is a matrix with its eigenvalues all real and negative, the  PI-RPNN multicollocation scheme is asymptotically stable.
\end{theorem}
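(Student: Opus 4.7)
The plan is to replace the diagonalization used in Proposition \ref{DIAG-T} by the Jordan canonical form and then invoke Theorem \ref{PAGKRATI-STABILITY-THM} through a functional-calculus argument. Write $A = P J P^{-1}$ with $J = \mathrm{diag}(J_1, \ldots, J_r)$, where each Jordan block has the form $J_k = \lambda_k I_{n_k} + N_{n_k}$, and $N_{n_k}$ is the nilpotent shift matrix of order $n_k$. Under the change of variable $\bm{z} = P^{-1}\bm{u}$ the system \eqref{eq:simplelinear0} decouples into the block-diagonal problem $d\bm{z}/dt = J\bm{z}$.

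The first step is to show that the one-step PI-RPNN amplification operator is a matrix function of $A$ that respects this block decomposition. Inspecting \eqref{peroni-1000}, the scalar $\lambda$ enters only through the matrix $\Psi^{(\ell)}$ in the combination $\lambda I$, whereas the RBF centres, shape parameters and collocation points depend only on $h$, $N$, $M$. Consequently, when $\lambda$ is promoted to the matrix $A$, the entries $\psi_{ji}^{(\ell)}$ become polynomials in $A$, all the matrices appearing commute, and the pseudo-inverse in \eqref{athens-2024} can be written through the holomorphic functional calculus of $A$. This yields an amplification operator of the form $S(A,h) = P\, S(J,h)\, P^{-1}$ with $S(J,h) = \mathrm{diag}(S(J_1,h), \ldots, S(J_r,h))$, so asymptotic stability reduces to showing $\|S(J_k,h)^{\ell}\| \to 0$ as $\ell \to \infty$ for each block.

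For a fixed Jordan block, since $N_{n_k}$ is nilpotent of order $n_k$ and commutes with $\lambda_k I$, the Taylor series of the analytic function $z \mapsto S(z,h)$ at $\lambda_k$ truncates and gives
\begin{equation*}
S(J_k, h) \;=\; \sum_{j=0}^{n_k - 1} \frac{1}{j!}\, S^{(j)}(\lambda_k, h)\, N_{n_k}^{j},
\end{equation*}
so $S(J_k,h)$ is upper-triangular with the scalar $S(\lambda_k,h)$ on the diagonal. Taking powers and using the binomial expansion (again via commuting matrices), each entry of $S(J_k,h)^{\ell}$ is of the form $q(\ell)\, S(\lambda_k,h)^{\ell - n_k + 1}$ for some polynomial $q$ of degree at most $n_k - 1$ in $\ell$. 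Because $\lambda_k < 0$, Theorem \ref{PAGKRATI-STABILITY-THM} yields strict contractivity $|S(\lambda_k,h)| < 1$, and the geometric decay dominates the polynomial factor, so $\|S(J_k,h)^{\ell}\| \to 0$. Summing the contributions of the finitely many blocks and absorbing the condition number of $P$ into the norm equivalence completes the proof.

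The main obstacle is the justification of the functional-calculus step: one must verify rigorously that the least-squares operation defining the regularised pseudo-inverse in \eqref{athens-2024} is well-defined when $\lambda I$ is replaced by $A$ (or equivalently by $J$) and that it commutes with the similarity transformation $P$. Once this is established, the rest follows from the well-known fact that rational (here analytic) matrix functions respect Jordan structure and decay geometrically whenever all diagonal values are strictly inside the unit disc; the polynomial prefactor arising from the nilpotent part is harmless.
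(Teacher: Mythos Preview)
Your functional-calculus route is appealing, but the central premise---that the one-step amplification operator for the system equals the scalar stability function evaluated at $A$---does not hold for the PI-RPNN scheme as defined. The obstacle you flag at the end is not a technicality to be checked but a genuine failure: differentiating the loss $\tfrac{1}{2}\sum_i \|F_i\|_{\mathbb{R}^d}^2 + \tfrac{\delta}{2}\|\bm{w}\|^2$ with respect to $\bm{w}_q$ produces first-order conditions containing $\Psi_{qi}(A)^{T} = \Psi_{qi}(A^{T})$, so the normal equations mix polynomials in $A$ with polynomials in $A^{T}$. For a non-normal matrix (in particular for a Jordan block $J_m$, where $J_m^{T}\neq J_m$) these do not commute, the resulting map is not a function of $A$ alone in the holomorphic-calculus sense, and it neither commutes with the similarity $P$ nor admits the truncated Taylor expansion in $N_{n_k}$ that you invoke. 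A secondary issue: $a_U$ is allowed to depend on $\lambda$ (the suggested choice involves $|\lambda|$), so even the scalar map $z\mapsto S(z,h)$ need not be analytic at $\lambda_k$.

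The paper sidesteps this by a direct computation in Jordan coordinates. For a single block $J_m$ it writes out the component residuals $\mathcal{E}_\ell$ and exploits their triangular structure: $\mathcal{E}_\ell$ depends only on the weight vectors $w_\ell$ and $w_{\ell+1}$. Setting all $\mathcal{E}_\ell=0$ (the collocation system is underdetermined, so this is attainable) and solving by backward substitution from $\ell=m$ down to $\ell=1$ yields an explicit upper-triangular iteration matrix $M^{(i)}$ whose diagonal entries are scalar stability indices $1+\lambda\Lambda_j^{(i)}$, each of modulus strictly below $1$ by Theorem~\ref{PAGKRATI-STABILITY-THM}. From there the conclusion is essentially the one you describe---products of upper-triangular matrices with contracting diagonals tend to zero---but the upper-triangular form is obtained by explicit algebra on the Jordan block, not by a matrix-function argument.
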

\begin{proof} To simplify the notation set $\alpha_j^{(\ell)}=a_{U} \theta_{j}^{(\ell)}$.

Suppose that the matrix $A$ is not diagonizable because its spectrum consists of non-single eigenvalues. In this case the matrix $A$ can be transformed in Jordan block form using the Jordan decomposition $A=PJP^{-1}$, 
\begin{eqnarray}
J=\left(\begin{array}{ccccc}
J_1 & 0 &  & \\
0 & J_2 &  & \\
  & \ddots & &  \\
  0 & &  &  J_{q}
  \end{array}\right), \,\,\,\,  J_{m}= \left( \begin{array}{cccccc}
  \lambda & 1 &  0 &  &    \\
  0 & \lambda & 1 & 0 &      \\
   &  & \ddots & &  \\
   &  &       &  \lambda & 1 \\
   &  &      &    0   &  \lambda 
   \end{array}\right)   \in {\mathbb R}^{m \times m},
  \end{eqnarray}
where each $J_{m}$ is a Jordan block of dimension $m\times m$, corresponding to the multiplicity of the corresponding eigenvalue. The invertible matrix $P$, whose columns are generalized eigenvectors of $A$, defines a new coordinate system $\bm{u}=P\bm{z}$ under which we can transform the system \eqref{eq:simplelinear0} to
\begin{equation}
\vspace{-1mm}
\frac{d \bm{z}}{dt}= J \bm{z}, \,\,\,\,\, \bm{z}(0)=\bm{z}_0 := P^{-1} \bm{u}_0.
\end{equation}
For these systems it suffices to study the stability analysis on the single Jordan block \cite{gear1984ode}.
Hence, we now consider the ODE for each Jordan block $J_{m}$,
\begin{equation}
\vspace{-1mm}
\frac{d \bm{y}}{dt}= J_{m} \bm{y}.
\label{eq:jordanblockdif}
\end{equation}
\color{black}
Setting $\bm{y}=(y_1, \cdots, y_m)^{T}$, and $y_{\ell}$, $\ell =1, \cdots, m$, we adopt the expansions
\vspace{-1mm}
\begin{eqnarray}\label{B-0}
y_{\ell}(t) =y_{\ell}(t_{i-1}) + (t-t_{i-1}) \sum_{j=1}^{M} w_{j,\ell}^{(i)} e^{-a_{j,\ell}^{(i)} (t- \tau_{j,\ell}^{(i)})^2}, \,\,\, t \in [t_{i-1},t_{i}], 
\end{eqnarray}
allowing, if necessary, different parameters for each $\ell$.
Differentiating \eqref{B-0}
 we can  obtain the errors ${\cal E}_{\ell}^{(i)}:=\frac{d y_{\ell}}{dt}(t_i)-\lambda y_{\ell}(t_i) - y_{\ell+1}(t_i) $   for each collocation point $t_{i}$. For notational convenience we define the following vectors
\begin{equation}
\begin{aligned}
q_{\ell}^{(i)}&=(e^{-a_{j,\ell}^{(i)} (t- t_{j,m}^{(i)})^2}, \,\,\, j=1, \cdots, M)  \in {\mathbb R}^{M}, \\
k_{\ell}^{(i)}&= (\big(1-\lambda h - 2 a_{j,\ell}^{(i)} h (t_{i} -\tau_{j,\ell}^{(i)}) \big)  e^{-a_{j,\ell}^{(i)} (t- \tau_{j,\ell}^{(i)})^2}, \,\,\,\, j=1, \cdots, M)  \in {\mathbb R}^{M}, \\
w_{\ell}^{(i)} &= (w_{j,\ell}^{(i)}, \,\,\,\, j=1, \cdots, M) \in {\mathbb R}^{M},
\end{aligned}
\end{equation}
noting that in the future for simplicity we may drop the explicit $i$
 dependence. In terms of the above we  have
\begin{equation}\label{20-7-2024-100}
 \begin{aligned}
{\cal E}_{m}^{(i)}&= \langle w_{\ell}^{(i)}, k_{m}^{(i)} \rangle - \alpha_{m}^{(i)}, \\
{\cal E}_{\ell}^{(i)} &= \langle w_{\ell}^{(i)}, k_{\ell}^{(i)} \rangle - h \langle w_{\ell+1}^{(i)}, q_{\ell+1}^{(i)}\rangle - \alpha_{\ell}^{(i)}, \,\,\,\, \ell =1, \cdots, m-1, \\ 
\alpha_{m}^{(i)}&=\lambda y_{m, i-1}, \,\,\, \\
\alpha_{\ell}^{(i)}&=\lambda y_{\ell, i-1} + y_{\ell+1,i-1}, \,\,\,\, \ell =1, \cdots, m-1.
\end{aligned}
 \end{equation}
The choice of the (random) weights $w_{\ell}^{(i)}$, $\ell=1, \cdots, m$, will be made so that the square error at the collocation points $\{t_{i} \}$ is minimized, i.e.
\begin{equation}\label{FL-1}
(w_{\ell}^{(i)}, \,\,\, \ell=1, \cdots, m) \in \arg\min_{(w_{\ell}^{(i)}, \,\,\, \ell=1, \cdots, m)} \sum_{\ell=1}^{m} ({\cal E}_{\ell}^{(i)})^{2}.
\end{equation}
The first order conditions reduce to ${\cal E}_{\ell} =0$, $\ell=1, \cdots, m$,
which in turn reduces to
 \begin{equation}\label{FL-4}
 \begin{aligned}
&\langle w_{m}^{(i)}, k_{m}^{(i)} \rangle = r_{m} :=\alpha_{m}^{(i)} \\
& \langle  w_{\ell}^{(i)}, k_{\ell}^{(i)} \rangle = r_{\ell}^{(i)} :=  h \langle w_{\ell+1}^{(i)}, q_{\ell+1}^{(i)} \rangle+ \alpha_{\ell}^{(i)} , \,\,\, \ell =m-1, \cdots, 1.
 \end{aligned}
 \end{equation}
Note that system \eqref{FL-4} can be solved in terms of a backward iteration scheme, starting from the equation for $k_{m}^{(i)}$, then proceeding to $\ell=m-1$ and substituting the solution obtained for $w_{m}^{(i)}$ to get $w_{m-1}^{(i)}$, and working similarly backwards up to $\ell=1$.\par
In the same fashion as for the diagonal case we have that \eqref{FL-4} yields
\vspace{-2mm}
 \begin{eqnarray}\label{FL-7}
w_{\ell}^{(i)} = \frac{r_{\ell}^{(i)}}{\| k_{\ell}^{(i)}\|^2} k_{\ell}^{(i)}, \,\,\,\,\, \ell= m, \cdots, 1,
 \vspace{-2mm}
 \end{eqnarray}
which is a solution in  implicit form since the coefficients $r_{\ell}^{(i)}$, given by the RHS of \eqref{FL-4}, depend on $w^{(i)}_{\ell+1}$.  Using however the  definition of $r_{\ell}^{(i)}$  in \eqref{FL-7}  in combination with
 \eqref{FL-7}, yields the backward iteration scheme
\begin{equation}\label{FL-9}
\begin{aligned}
& r_{m}^{(i)} = \alpha_{m}^{(i)}, \quad \\ 
&r_{\ell}^{(i)}=  \Lambda_{\ell +1} r_{\ell+1} + \alpha_{\ell}^{(i)}, \,\,\,\,\ell =m-1, \cdots, 1, \\
& \Lambda_{\ell +1}^{(i)} := h \frac{\langle  k_{\ell+1}^{(i)}, q_{\ell+1}^{(i)} \rangle}{\| k_{\ell+1}^{(i)} \|^2},
\end{aligned}
\vspace{-2mm}
\end{equation}
that can be explicitly solved in terms of
$\Lambda_{\ell}^{(i)}$, $\alpha_{\ell}^{(i)}$ and combined with \eqref{FL-7} to obtain the required solution for the weights. 

By induction and recalling the dependence of $a$ on $y$ (see \eqref{20-7-2024-100})
we obtain:
\begin{equation}\label{B-2}
    \begin{aligned}
        r_{m-\nu}^{(i)} =\lambda y_{m-\nu, i-1} + \sum_{\ell=0}^{\nu-1} \bigg( \prod_{\sigma=\ell+1}^{\nu-1} \Lambda_{m-\sigma}^{(i)} \bigg) (1+ \lambda \Lambda_{m-\ell} ) y_{m-\ell, i-1}, \,\,\,\, \nu=1, \cdots, m-1,
    \end{aligned}
\end{equation}
where  the convention that $\prod_{s=1}^{0} \Lambda_{s}^{(i)}  :=1 $ is used.

We can now reconstruct the weights $w_{\ell}^{(i)}$, using \eqref{FL-7} and \eqref{B-2}, to obtain an expression for $\bm{y}_{i}=(y_{1,i}, y_{2,i}, \cdots, y_{m,i})^{T}$ in terms of $\bm{y}_{i-1}=(y_{1,i-1}, y_{2,i-1}, \cdots, y_{m,i-1})^{T}$ using the representation \eqref{B-0}. After some algebra
we get that
\begin{eqnarray}
  \bm{y}_{i} = M^{(i)} \bm{y}_{i-1} ,
\end{eqnarray}
where $M^{(i)}$ is the upper triangular $m\times m$ matrix
\begin{equation}
M^{(i)}=\begin{pmatrix}
\ddots & \vdots & \vdots & \vdots & \vdots &  \vdots                                        \\
& \ddots & \vdots &  \vdots & \vdots &       \vdots                                     \\
& & &    (1+\lambda \Lambda_{m-2}^{(i)} )         & \Lambda_{m-2}^{(i)}  (1+\lambda \Lambda_{m-1}^{(i)} )         & \Lambda_{m-2}^{(i)}    \Lambda_{m-1}^{(i)}(1+\lambda \Lambda_{m}^{(i)} )       \\
 & & &  0   & (1+\lambda \Lambda_{m-1}^{(i)})  &\Lambda_{m-1}^{(i)}(1+\lambda \Lambda_{m}^{(i)})  \\
  & & &  0   & 0 & (1+\lambda \Lambda_{m}^{(i)})
\end{pmatrix}
\end{equation}
The eigenvalues of the matrix $M_{i}$ are the diagonal elements $M^{(i)}_{jj}=1 + \lambda \Lambda_{j}^{(i)}$, $j=1,\cdots, m$.

Note that as of Thm. \ref{PAGKRATI-STABILITY-THM}, we have that $|1+\lambda {\Lambda_j}^{(i)}|<1$. Recursively, we get:
\begin{equation}
   \bm{y}_i = M^{(i)} M^{(i-1)} \cdots M^{(1)} \bm{y}_0
\end{equation}
As each $M^{(j)},, j=1,\dots,i$ is an upper triangular matrix the transition matrix $M^{(i)} M^{(i-1)} \cdots M^{(1)}$ is also an upper triangular matrix with eigenvalues the product of the eigenvalues of each $M^{(j)}, j=1,\dots,i$. Hence, the 
matrix $M^{(i)} M^{(i-1)} \cdots M^{(1)}$ has $m$ distinct eigenvalues and 
the $m$ corresponding eigenvectors, say, $\{{\bm{v}_1}^{(i)},{\bm{v}_2}^{(i)},\dots {\bm{v}_m}^{(i)}\}$ form a basis in $\mathbb{R}^m$. Hence, we can write $y_{0}$ as  $\bm{y}_{0}=\sum_{j=1}^m {c_j}^{(i)} {\bm{v}_j}^{(i)}$.
Since:
\vspace{-2mm}
\begin{equation}
 M^{(i)} M^{(i-1)} \cdots M^{(1)} {\bm{v}_j}^{(i)} = \prod_{p=1}^{i}(1 + \lambda \Lambda_{j})^{p} {\bm{v}_j}^{(i)}, \, j=1,2,\dots,m,
 \vspace{-1mm}
 \end{equation}
we have:
\begin{equation}
\begin{aligned}
\bm{y}_{i} = {c_1}^{(i)} M^{(i)} M^{(i-1)} \cdots M^{(1)} {\bm{v}_1}^{(i)} + c_2 M^{(i)} M^{(i-1)} \cdots M^{(1)} {\bm{v}_2}^{(i)}   \\
+ \cdots + {c_m}^{(i)}M^{(i)} M^{(i-1)} \cdots M^{(1)} {\bm{v}_m}^{(i)}, 
\end{aligned}
\end{equation}
or
\begin{equation}
    \bm{y}_{i} = {c_1}^{(i)}  \prod_{p=1}^{i} (1 + \lambda \Lambda_{1})^{p} \bm{v}_1^{(i)} + {c_2}^{(i)} \prod_{p=1}^{i}(1 + \lambda \Lambda_{2})^{p} \bm{v}_2^{(i)} + \cdots + {c_m}^{(i)} \prod_{p=1}^{i}(1 + \lambda \Lambda_{m})^{p} \bm{v}_m^{(i)}.
\end{equation}
But each product $ \prod_{p=1}^{i} (1 + \lambda \Lambda_{j})^{p} \rightarrow 0, j=1,2,\dots,m$, as $i\rightarrow \infty$  because \\
$|(1 + \lambda \Lambda_{j})^{p}| < 1 $ $ \forall p,j$. Consequently:
\vspace{-1mm}
\begin{equation}
\lim_{i \to \infty} \bm{y}_i = \bm{0},
\vspace{-2mm}
\end{equation}
and therefore because of $\bm{u}=P\bm{z}$, and, (\ref{eq:jordanblockdif}), we conclude.
\end{proof}

\begin{remark}
Note that the above proof can be extended to every matrix $A$ over the field of complex numbers, i.e., for matrices also with multiple complex eigenvalues (see for example in \cite{sergeichuk2000canonical}). 
\end{remark}
\vspace{-2mm}
\color{black}
\begin{remark}
    The results of this section can be applied also in the case of linear parabolic equations, after discretization in space and time using standard methods, such as finite differences schemes, or using an eigenfunction expansion, in which case one can apply Proposition \ref{DIAG-T} (see example \ref{sec:lin_diff_rec} below).
\end{remark}

\section{Numerical results}
\label{sec:num_results}
In this section, we present indicatively, the numerical approximation accuracy and computational costs of the proposed RPNN.
For our illustrations, we utilize two benchmark problems included in the main text: {a 2d stiff 
system of non-normal linear ODEs \cite{higham1993stiffness},} and a linear diffusion-reaction PDE discretized using finite differences. 
Furthermore, we compare the performance of the proposed scheme versus various traditional implicit  schemes: the Backward Euler, implicit midpoint, implicit trapezoidal (Crank-Nikolson), the 2-stage Gauss scheme and the 2 and 3 stages Radau schemes. The numerical results that we present here are indicative. We don't aim to make a direct comparison on the numerical approximation accuracy and computational cost with the other schemes. We have presented an extensive comparison analysis for stiff nonlinear systems of ODEs and DAEs in another work \cite{fabiani2023parsimonious}, where the focus was on the optimal tuning of the hyperparameters of PIRPNNs (see also in the discussion).
For our illustrations, we employed 3-stages (M=3, N=9) and 10 stages (M=10 and N=30) PIRPNN schemes. The upper bounds $a_U$ are computed according to the requirements proved in the previous sections. Also to be computationally efficient, we randomize only one time, once and for all, the internal weights of the PIRPNN, and we keep them fixed along the entire time domain. In this way, we can precompute the pseudo-inverse matrix for the solution of the problem. Specifically, we compute the pseudoinverse using the standard SVD decomposition for low-dimensional ODEs (e.g., example 1), and a sparsity-exploiting COD decomposition for high-dimensional ODEs (e.g., example 2) \cite{fabiani2023parsimonious}. All the computations are carried on single core of a Intel Core i7-10750H CPU \@ 2.60GHz, with $16$GB of RAM running Matlab 2020b.

\color{black}
\subsection{Example 1: A Linear ODE System – Constant Coefficients, Non-Normal, Homogeneous}
The model reads:
\vspace{-1mm}
\begin{equation}
    \frac{dy}{dt}=Ay, \qquad A=\begin{bmatrix}
        -10 & 100\\ 0 & -1
    \end{bmatrix}.
\end{equation}
This is taken from example 2 in \cite{higham1993stiffness}. 
\begin{figure}[ht!]
    \centering
    \subfigure[]{\includegraphics[trim={0.cm 0.cm 0.5cm 0.5cm},clip,width=0.45\textwidth]{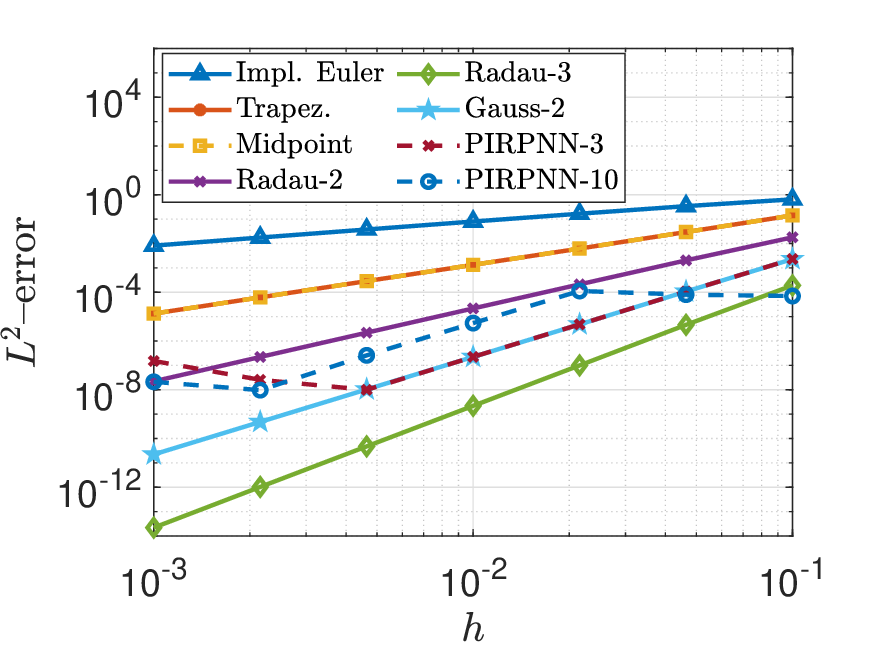}
    }
    \subfigure[]{\includegraphics[trim={0cm 0cm 0.5cm 0.5cm},clip,width=0.45\textwidth]{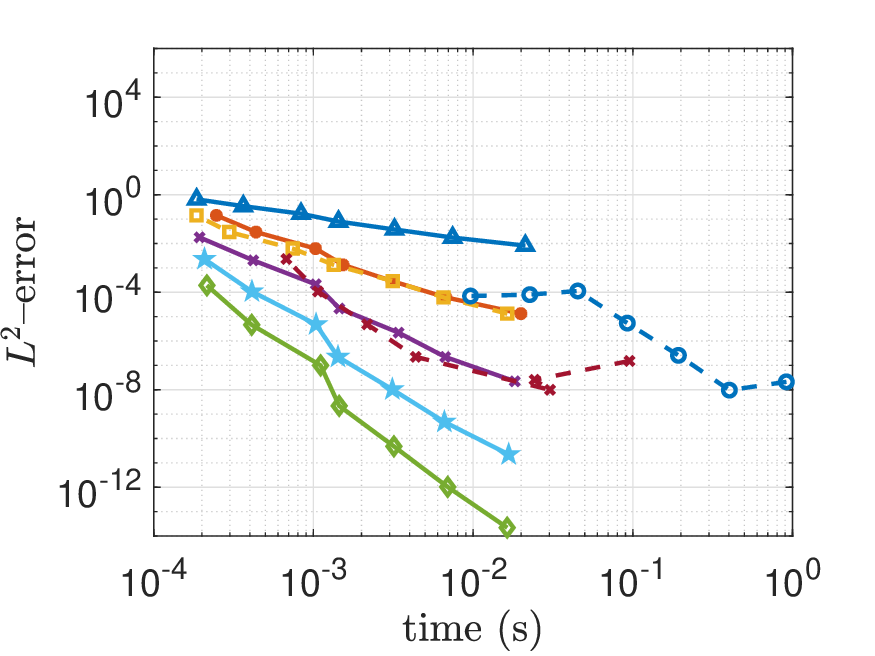}
    }   \vspace{-3mm} \caption{\color{black} Numerical approximation accuracy and computational cost for the stiff "non-normal" linear ODEs benchmark problem in \cite{higham1993stiffness}. The PIRPNNs, uses $M=3$ and $M=10$ collocation points in each subinterval of size $h$ and $N=9$ and $N=30$ neurons, respectively. The numerical approximation accuracy of the PIRPNNs are reported in dashed lines, while traditional implicit schemes with solid lines. (a) $L^2$--error in terms of fixed time step $h$. (b) $L^2$--error with respect to machine/computational execution time.}
    \label{fig:tref2d}
\end{figure}
We solve the problem in the interval $[0,5]$ using various fixed time steps, $h$. Figure \ref{fig:tref2d} compares the convergence of our proposed PIRPNN, with several traditional implicit solvers.

The three-point and ten-point PIRPNN collocation schemes exhibit the expected stability across the entire range of time steps tested. Although their accuracy is sub-optimal, due to fixed internal parameters not being tuned for precision. They still outperform classical implicit schemes such as implicit Euler, implicit trapezoidal, midpoint, and Radau-2. The three-point and ten-point PIRPNN schemes also show performance comparable to the Gauss-2 method.

\color{black}
\subsection{A linear diffusion-reaction PDE problem}
\label{sec:lin_diff_rec}
We consider a simple, linear diffusion-reaction PDE given by:
\vspace{-2mm}
\begin{eqnarray}
u_t=\nu u_{xx}-\lambda u.
\label{eq:pdelinear}
\end{eqnarray}
with Neumann BCs in $[0, \pi]$ and initial conditions
$u(x,0) = a \cos(2 x)+c$  (here $a =0.4, c =1.5 $).
Based on the above, the analytical solution $u(t,\bm{x})$ of  Eq. \eqref{eq:pdelinear} reads:
\begin{eqnarray}
u(t,\bm{x}) = a \exp(-(4\nu+\lambda) t) \cos(2 x)+c \exp(-\lambda t).
\label{eq:pdeqn9}
\end{eqnarray}
For our simulations, we have set $\nu=0.1$ and $\lambda=10$. 
\begin{figure}[ht!]
    \centering
    \subfigure[]{ 
    \includegraphics[trim={0 0 0.5cm 0.5cm},clip,width=0.45\textwidth]{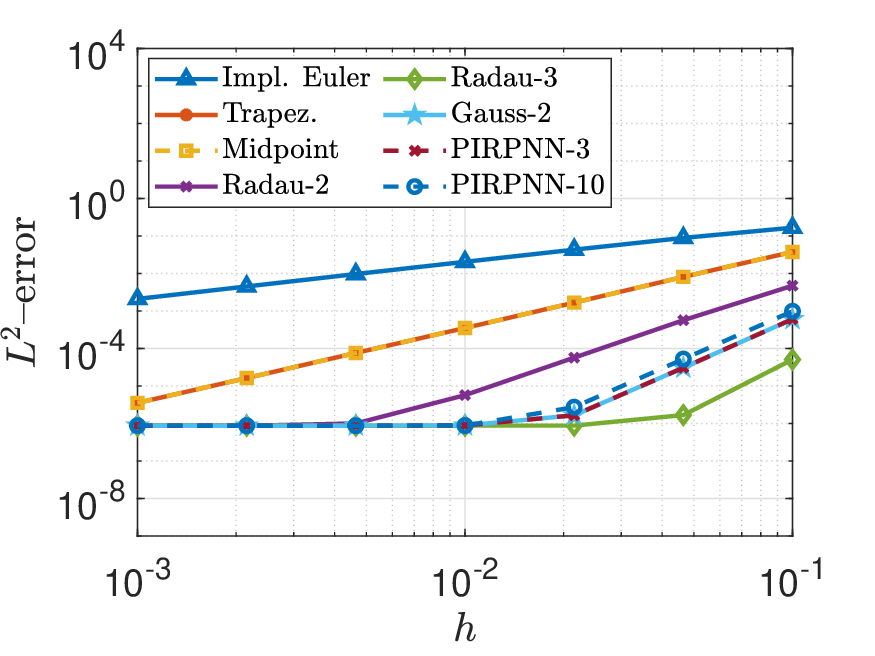}
    }
    \subfigure[]{ 
    \includegraphics[trim={0 0 0.5cm 0.5cm},clip,width=0.45\textwidth]{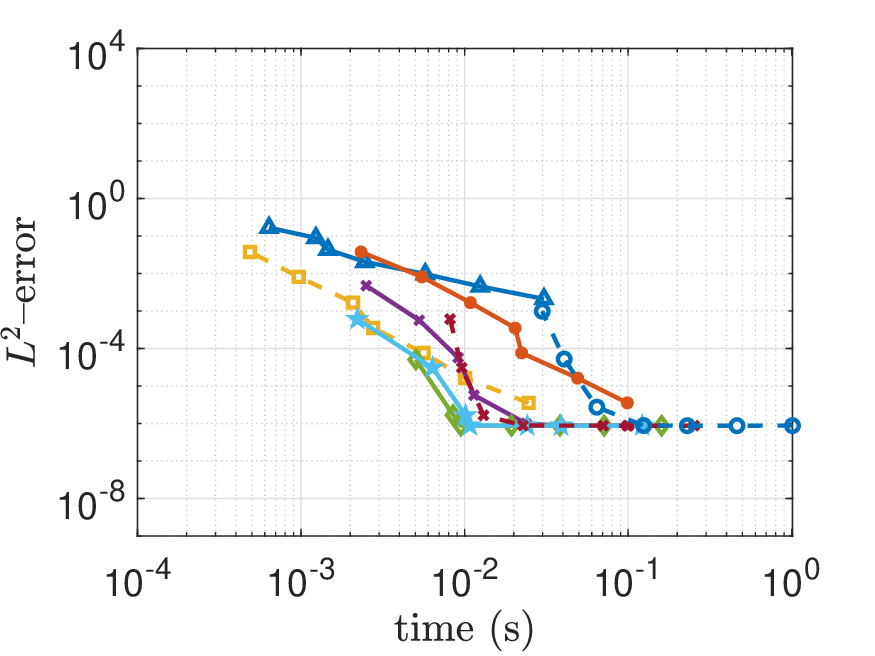}
    }
    \vspace{-2mm}
    \caption{The linear Diffusion-Reaction PDE in Eq \eqref{eq:pdelinear}, with $\nu=0.1$ and $\lambda=10$ in the time interval $[0,\, 1]$, discretized with FD with $n=100$. For the RPNNs we used $M=3$ and $M=10$ collocation points in each sub-interval of size $h$ and $N=9$ and $N=30$ neurons, respectively. The results with the PIRPNNs are reported in dashed lines, while implicit RK schemes with solid lines. (a) $L^2$--error in terms of fixed time step $h$. (b) $L^2$--error with respect to machine/computational execution time.}
\label{fig:PDE_reaction_diffusion}
\end{figure}
For the discretization in space of the PDE, we employed a second order centered finite difference scheme over a grid of $n+2$ points $x_i, i=0,1,\dots,n,n+1$. We select in particular $n=100$.
The boundary conditions are hardwired into the equations. The resulting system of $n$ ODEs is given:
\vspace{-1mm}
\begin{equation}\label{DIFFUSION-ODE}
 \frac{d u(x_i,t)}{dt}=\frac{d u_i}{dt}=\nu \frac{u_{i+1}-2u_{i}-u_{i-1}}{{\Delta x^2}}-\lambda u_i,\quad
    u_0=\frac{4u_1-u_2}{3 {\Delta x}}, \quad u_{n+1}=\frac{4u_n-u_{n-1}}{3 {\Delta x}},
\end{equation}
where ${\Delta x}=2\pi/(n+1)$.
The resulting ODE \eqref{DIFFUSION-ODE} is of the general form \eqref{eq:simplelinear0}, with the matrix $A$ corresponding to the finite difference approximation matrix for the 1-D Laplacian. The resulting system presents stiffness properties on account of the spectrum of the matrix $A$.
In this setting, the matrix $A$ representing the discretized PDE operator has the largest eigenvalue in absolute value equal to $-423.33$, indicating a highly stiff regime.
The numerical results, in the time interval $[0,1]$ using various fixed time steps $h$, of the proposed PIRPNNs, leveraging random projections, and the other traditional implicit schemes are depicted in Figure \ref{fig:PDE_reaction_diffusion}. 
%
%
The three-point and ten-point PIRPNN collocation schemes demonstrate the expected stability over the full range of time steps considered. While their accuracy is sub-optimal, due to fixed internal parameters not optimized for precision, they still surpass classical implicit schemes such as implicit Euler, implicit trapezoidal, midpoint, and Radau-2.
The three-points and ten-points PIRPNN collocation scheme also exhibit performance comparable to the Gauss-2 method.
However, PIRPNNs generally incur slightly higher computational costs due to the use of $N=3M$ neurons, as illustrated in panel (b) of Figure \ref{fig:PDE_reaction_diffusion}, this overhead becomes more evident when handling high-dimensional system (100 ODEs). 
Reducing RPNNs overparametrization, as suggested in \cite{fabiani2023parsimonious}, could enhance computational efficiency. Notably, the finite difference discretization limits the achievable accuracy for all methods, with a saturation around 1E$-$8 for $n=100$. 

\section{Conclusions}
{We presented a (constructive) linear stability analysis proof for the solution of (stiff) ODEs with physics-informed RPNNs (PI-RPNNs) using Gaussian radial basis functions as activation functions}. We demonstrated that PI-RPNNs, with appropriate sampling of the hyperparameters of the activation functions, are consistent and  unconditionally asymptotically stable schemes for (stiff) linear ODEs for all step sizes and that they have the correct stability.
This is the first time such constructive proof is given for the stability of physics-informed RPNNs for ODEs. \color{black}{We also demonstrated
that RPNNs are uniform approximators to the solution of ODEs.

Despite the fact that the primary focus of the current work is on the theoretical investigation of the stability properties of PI-RPNNs (done here for the first time), numerical comparisons on two benchmark stiff problems were also carried out indicatively, to evaluate the computational cost, convergence, and numerical approximation accuracy of the proposed PIRPNNs for various step sizes against traditional implicit schemes. These comparisons demonstrated that the proposed physics-informed RPNN achieves comparable 
numerical approximation accuracy to traditional implicit schemes while also maintaining a comparable computational cost across a wide range of step sizes. 
Extensive comparisons of such RPNNs schemes with other traditional schemes can be found in our earlier works \cite{fabiani2021numerical, fabiani2023parsimonious} where we showed, that optimally-designed PI-RPNNs rival traditional numerical  methods such as FD, FEMs, and adaptive stiff solvers for ODEs, while outperform physics-informed deep neural networks-based methods) by several orders of magnitude, both in approximation accuracy and computational cost.
\color{black}

We believe that our work will open the path for a more rigorous numerical analysis of scientific machine learning algorithms for the solution of both the forward and inverse problems for differential equations. 

\section*{Acknowledgements}
E.B. acknowledges support by ONR, ARO, DARPA RSDN, NIH, NSF, CRCNS.
C.S. acknowledges partial support from PNRR MUR Italy, projects PE0000013-Future Artificial Intelligence Research-FAIR \& CN0000013 CN HPC - National Centre for HPC, Big Data and Quantum Computing, and the Istituto di Scienze e Tecnologie per l'Energia e la Mobilità Sostenibili-CNR. C.S. \& E.B. acknowledge support from  Gruppo Nazionale Calcolo Scientifico-Istituto Nazionale di Alta Matematica (GNCS-INdAM). 
\bibliographystyle{plain}
\bibliography{references} 

\end{document}